\newcommand{\R}{\mathbb{R}}
\newcommand{\C}{\mathbb{C}}
\newcommand{\N}{\mathbb{N}}
\newcommand{\A}{\mathcal{A}}
\renewcommand{\L}{\mathcal{L}}
\newcommand{\B}{\mathcal{B}}
\newcommand{\const}{\mathbf c}
\renewcommand{\Re}{\operatorname{Re}}
\newcommand{\fra}{\mathfrak{a}}
\renewcommand{\A}{\mathcal{A}}
\renewcommand{\B}{\mathcal{B}}
\renewcommand{\mid}{\, \vert \,}
\newcommand{\lra}{\longrightarrow}
\theoremstyle{plain}
\newtheorem{theorem}{Theorem}[section]
\newtheorem{proposition}[theorem]{Proposition}
\newtheorem{lemma}[theorem]{Lemma}
\newtheorem{corollary}[theorem]{Corollary}
\newtheorem{remark}[theorem]{Remark}
\theoremstyle{definition}
\newtheorem{notation}[theorem]{Notation}
\theoremstyle{remark}
\begin{document}

\title{Stability for evolution equations governed by a non-autonomous form}

\author{OMAR EL-MENNAOUI}
\address{OMAR EL-MENNAOUI: Department of Mathematics,
	University Ibn Zohr, Faculty of Sciences, Agadir, Morocco}
\email{elmennaouiomar@yahoo.fr}

\author{HAFIDA LAASRI}
\address{HAFIDA LAASRI: Lehrgebiete Analysis, Fakult\"at Mathematik und Informatik, Fernuniversit\"at in Hagen, D-58084 Hagen, Germany}
\email{hafida.laasri@fernuni-hagen.de}


\thanks{}



\begin{abstract}
This paper deals with the approximation of non-autonomous evolution equations of the form
\begin{equation*}\label{Abstract equation}
\dot u(t)+\A(t)u(t)=f(t)\ \ t\in[0,T],\ \
u(0)=u_0.
\end{equation*}
where $\A(t),\ t\in [0,T]$  arise from  a non-autonomous sesquilinear forms $\fra(t;\cdot,\cdot)$ on a Hilbert space $H$ with constant domain $V\subset H.$ Assuming the existence of a sequence  $\fra_n:[0,T]\times V\times V\lra \C, n\in \N$ of non-autonomous forms such that the associated Cauchy problem has $L^2$-maximal regularity in $H$ and $a_n(t,u,v)$ converges to $a(t,u,v)$ as $n\to \infty,$ then among others we show under additional assumptions that the limit problem has $L^2$-maximal regularity. Further we show that the convergence is uniformly on the initial data $u_0$ and the inhomogeneity $f.$
\end{abstract}

\subjclass{35K45,35K90,47D06}
\keywords{Sesquilinear forms, non-autonomous evolution equations, maximal regularity, approximation}

\maketitle
\section*{Introduction\label{s1}}
\noindent Throughout this paper $H,V$ are two separable  Hilbert spaces over $\mathbb K=\mathbb C.$ We denote by $(\cdot \mid \cdot)_V$ the scalar product and  $\|\cdot\|_V$ the norm
on $V$ and by $(\cdot \mid \cdot), \|\cdot\|$ the corresponding quantities in $H.$ Moreover, we assume that $V$ is densely and continuously embedded into $H.$ Let $V'$ denote  the antidual of $V$ and $\langle \cdot, \cdot \rangle$ the duality between $V'$ and $V$. As usual, by identifying $H$ with  $H',$ we have   $V\hookrightarrow H\cong H'\hookrightarrow V'$ with continuous and dense embedding. Let $T>0.$ Let $\fra:[0,T]\times V\times V\lra C$ be a \textit {closed non-autonomous sesquilinear form}, i.e., $\fra(.,u,v)$ is measurable for all $u,v\in V,$ and $\fra(t;\cdot,\cdot)$ is a sesquilinear form with
\begin{equation}\label{eq:continuity-nonaut-introduction}
| \fra(t;u,v) | \le M \|u\|_V \|v\|_V, \text{ and } \ \Re~ \fra(t;u,u) +\beta\|u\|_H^2\ge \alpha \|u\|^2_V
\end{equation}
for all $t\in[0,T], v,u\in V$ and for some constants $\beta\in \R, \alpha, M>0.$ By Lax-Milgram Theorem, for each $t\in[0,T]$ there exists an isomorphism $\A(t):V\to V^\prime$ such that
\[\langle \A(t) u, v \rangle = \fra(t;u,v), \qquad \text{}  u,v \in V.\] We call $\A(t)$ the operator associated with $\fra(t,\cdot,\cdot)$ on $V^\prime.$ Seen as an unbounded operator on $V'$ with domain $D(\A(t)) = V,$ the
operator $- \A(t)$ generates a holomorphic $C_0-$semigroup $\mathcal T$ on $V'.$ Further, we denote by $A(t)$ the part
of $\A(t)$ on $H$; i.e.,\
\begin{align*}
D(A(t)) := {}& \{ u\in V : \A(t) u \in H \}\\
A(t) u := {}& \A(t)u, \quad \text{ for } u\in D(A(t)).
\end{align*}
It is a known fact that  $-A(t)$ generates a holomorphic $C_0$-semigroup  $T$ on $H$  and $T=\mathcal T_{\mid H}$ is the
restriction of the semigroup generated by $-\A$ to $H.$ Then $A(t)$ is the operator \textit{induced} by $\fra(t,.,.)$  on $H.$
See, e.g., \cite{Ar06},\cite{Ka},  \cite[Chap.\ 2]{Tan79} and \cite{Bre11}. 
\medskip

\par\noindent  Consider the non-autonomous Cauchy problem 
\medskip
\begin{equation}\label{Abstract Cauchy problem 0}
\dot{u} (t)+\A(t)u(t)=f(t),\quad{a.e} \ \text on \quad [0,T], \quad u(0)=u_0.
\end{equation}
Then the following \textit{$L^2$-maximal regularity in $V'$} result has been proved by J. L. Lions on 1961:

\begin{theorem}(\textrm{Lions} 1961)\label{wellposedness in V'} The non-autonomous Cauchy problem (\ref{Abstract Cauchy problem 0}) has $L^2$-\textit{maximal regularity in} $V',$ i.e., for given $f\in L^2(0,T;V')$ and $u_0\in H,$ (\ref{Abstract Cauchy problem 0}) has a unique solution $u$ in $MR_2(V,V'):=L^2(0,T;V)\cap H^1(0,T;V').$ Moreover, there exists a constant $c_0>0$ depending only on $\alpha, \beta, M$ and $c_H$ such that 
	\begin{equation}\label{uniform estimation V'}
	\|u\|_{MR_2(V,V')}\leq c_0\big[\|f\|_{L^2(0,T;V')}+\|u_0\|_H,\big]
	\end{equation}
	where $c_H$ is the continuous embedding constant of $V$ into $H.$
\end{theorem}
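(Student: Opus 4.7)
The strategy is the classical Galerkin approximation, which reduces the abstract problem to a sequence of finite-dimensional ODEs whose solutions converge weakly to an element of $MR_2(V,V')$.

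First, using separability of $V$, I would pick a sequence $(e_k)_{k\in\N}\subset V$ that is linearly independent, total in $V$, and $H$-orthonormal. Setting $V_n:=\mathrm{span}(e_1,\ldots,e_n)$ and letting $P_n$ denote the $H$-orthogonal projection onto $V_n$, I would solve for each $n$ the Galerkin problem: find an absolutely continuous $u_n:[0,T]\to V_n$ with $u_n(0)=P_nu_0$ and
\[
(\dot u_n(t)\mid v) + \fra(t;u_n(t),v) = \langle f(t),v\rangle, \qquad v\in V_n,\ \text{a.e.\ } t\in[0,T].
\]
Writing $u_n(t)=\sum_{k=1}^n g_k^n(t)e_k$ turns this into a linear ODE system with measurable, locally bounded (in $t$) coefficients and $L^2$ right-hand side, for which Carath\'eodory's theorem yields a unique global solution.

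The heart of the argument is the a priori estimate. Testing with $v=u_n(t)$, taking real parts, and applying the coercivity in (\ref{eq:continuity-nonaut-introduction}) together with Young's inequality yields
\[
\tfrac12 \tfrac{d}{dt}\|u_n(t)\|^2 + \tfrac{\alpha}{2}\|u_n(t)\|_V^2 \le \beta\|u_n(t)\|^2 + \tfrac{1}{2\alpha}\|f(t)\|_{V'}^2.
\]
Integration and Gronwall's inequality then bound $(u_n)$ in $L^\infty(0,T;H)\cap L^2(0,T;V)$ by a constant depending only on $\alpha,\beta,M,c_H,\|u_0\|_H,\|f\|_{L^2(0,T;V')}$. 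After extracting a subsequence with $u_n\rightharpoonup u$ in $L^2(0,T;V)$, I would pass to the limit in the Galerkin identity tested first against $v(t)=\varphi(t)e_k$ with $\varphi\in C_c^\infty(0,T)$ and fixed $k\le n$; this identifies the limit $u$ as a weak solution of $\dot u+\A(\cdot)u=f$ in the sense of distributions. Since $\|\A(t)\|_{V\to V'}\le M$ uniformly in $t$, one gets $\dot u=f-\A(\cdot)u\in L^2(0,T;V')$, and hence $u\in MR_2(V,V')$.

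For uniqueness and the bound (\ref{uniform estimation V'}), I would invoke the integration-by-parts identity $\tfrac{d}{dt}\|u(t)\|^2=2\Re\langle\dot u(t),u(t)\rangle$ valid on $MR_2(V,V')\hookrightarrow C([0,T];H)$, which transfers the Gronwall-type estimate from $u_n$ to $u$ itself. The initial condition $u(0)=u_0$ is then read off by testing against $v(t)=\psi(t)e_k$ with $\psi(T)=0$ and integrating by parts in $t$, using $u_n(0)=P_nu_0\to u_0$ strongly in $H$. The main technical obstacle is the passage to the limit in the form term $\int_0^T \fra(t;u_n(t),v(t))\,dt$: since $u_n\rightharpoonup u$ only weakly in $L^2(0,T;V)$, one cannot plug in arbitrary $v$; the tensor-product structure $v(t)=\varphi(t)e_k$ and the sesquilinearity of $\fra(t;\cdot,\cdot)$ are what convert this weak convergence into convergence of the bilinear integral, after which a density argument finishes the proof.
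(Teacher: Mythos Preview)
Your Galerkin argument is correct and is in fact one of the two standard proofs the paper itself cites for this result: the paper does not give its own proof of Theorem~\ref{wellposedness in V'} but attributes it to Lions, mentioning both the proof via Lions' representation theorem in \cite{Lio61} and the Galerkin method in \cite[XVIII, Chapter~3]{DL88}. Your approach is precisely the latter, so there is nothing to correct or contrast.
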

Lions proved this result in \cite{Lio61} (see also \cite[Chapter 3]{Tho03}) using a  representation theorem
of linear functionals due to him self and  usually known in the literature as  \textit{Lions's representation Theorem} and
using  Galerkin's method in  \cite[ XVIII
Chapter 3, p. 620]{DL88}. We refer  \cite[Section 5.5]{Tan79} and \cite{LASA14} for other proofs. The theorem of Lions requires  only the measurability of  $t\mapsto \fra(t;u,v)$  for all $u,v\in V$. However, in applications to boundary problems maximal regularity in $V'$ is not sufficient because it is only the part $A(t)$ of $\A(t)$ in $H$ that realizes the boundary conditions in question. Precisely one is more interested on \textit{$L^2$-maximal regularity in $H,$} i.e., the solution $u$ of (\ref{Abstract Cauchy problem 0}) belong to $H^1(0,T;H)$ if $f\in L^2(0,T; H)$ and $u_0\in V.$ The problem of $L^2$-maximal regularity in $H$ was initiated  by Lions in \cite[p.\ 68]{Lio61} for $u_0=0$ and $\fra$ is symmetric. In general, we have to impose more regularity on the form $\fra$ then measurability of the form is not sufficient \cite{D1,ADF17}. However, under additional regularity assumptions on the form $\fra,$   the
initial value $u_0$ and the inhomogeneity $f,$ some positive results were already done by Lions in
\cite[p.~68, p.~94, ]{Lio61}, \cite[Theorem~1.1, p.~129]{Lio61} and
\cite[Theorem~5.1, p.~138]{Lio61} and by Bardos \cite{Bar71}. More recently, this
problem has been studied  with some progress and different
approaches \cite{ADLO14, Ar-Mo15, O15, D2, OH14, OS10, ELLA15, Di-Za16, Fa17}. Results on multiplicative perturbation are established in \cite{ADLO14, D2, AuJaLa14}. See also the recent review paper \cite{ADF17} for more details and references.
\medskip

Exploiting the ideas and proofs of a recent result of Arendt and Monniaux \cite{Ar-Mo15} we study in this paper  stability and  the uniform approximation of the non-autonomous Cauchy problems (\ref{Abstract Cauchy problem 0}). More precisely, assume that there exists a sequence  $\fra_n:[0,T]\times V\times V\lra \C$ of non-autonomous forms such that Cauchy problem  
\begin{equation}\label{approximation problem 0}
\dot{u}_n (t)+\A_n(t)u_n(t)=f(t), \quad u(0)=u_0
\end{equation}
associated with $\fra_n$ has $L^2$-maximal regularity in $H$ and $a_n(t,u,v)$ converges to $a(t,u,v)$ as $n\to \infty.$ Then our aim is to study weather $L^2$-maximal regularity is iterated by the limit problem (\ref{Abstract Cauchy problem 0}) and weather the sequence  $(u_n)_{n\in\N}$ of solutions of (\ref{approximation problem 0}) converges uniformly on $u_0$ and $f$ to the the solution of (\ref{Abstract Cauchy problem 0}). 
Let  $0< \gamma< 1.$ Let $\omega_n:[0,T]\longrightarrow [0,+\infty), n\in\N,$ be a sequence of non-decreasing continuous function and  let $(d_n)_{n\in\N}$ be a zero real sequence such that
\begin{align*}
 &|\fra_n(t,u,v)-\fra(t,u,v)| \le d_n \Vert u\Vert_{V} \Vert v\Vert_{V_\gamma}, \ t\in[0,T], \ u,v\in V,\\	
\\&|\fra_n(t,u,v)-\fra_n(s,u,v)| \le\omega_n(|t-s|) \Vert u\Vert_{V} \Vert v\Vert_{V_\gamma}, \ t\in[0,T], \ u,v\in V,\\
\\& \displaystyle\sup_{t\in[0,T],n\in \N} \frac{\omega_n(t)}{t^{\gamma/2}}<\infty,  \text{ and }  \  \ \displaystyle\sup_{n\in \N}\int_0^T\frac{\omega_n(t)}{t^{1+\gamma/2}} {\rm d}t<\infty\ \
\end{align*}
for all $t,s\in[0,T], n\in\N$ and for all $u,v\in V,$ where $V_\gamma:=[H,V]_\gamma$ is the complex interpolation space. Then we show in Section \ref{weak approximation H} that the limit problem (\ref{Abstract Cauchy problem 0}) has also $L^2$-maximal regularity in $H$ and the sequence  $(u_n)_{n\in\N}$ of solutions of (\ref{approximation problem 0}) converges weakly in $MR_2(V,H)$ to the solution of (\ref{Abstract Cauchy problem 0}). This convergences holds for the  strongly topology of $MR_2(V,H)$ and uniformly on $u_0$ and $f$ provided the sequence $(d_n)_{n\in\N}$ is decreasing, $\lim\limits_{n\to \infty}d_nn^{\gamma/2}=0$ and  $\lim\limits_{n\to \infty} \displaystyle\int_{0}^{1/n}\frac{\omega_n(r)}{r^{{1+\gamma/2}}}  {\rm d}r= 0,$ see Section \ref{Uniform Convergence}. Moreover we show that similar results holds on the space $C(0,T;V)$ if  $(u_n)_{n\in\N}\subset C(0,T;V).$
In the last section we provide an explicit approximation of $\fra$ that satisfies the above required hypothesis. The reader interested in examples of application is referred to above cited papers and the references therein. 
\section{Preliminary results: uniform approximation on $V'$}\label{section Uniform Cv V'}
\noindent In this section $\fra:[0,T]\times V\times V\lra C$ is a closed non-autonomous sesquilinear form. Moreover, we assume that there exist a sequence of closed non-autonomous sesquilinear forms $\fra_n:[0,T]\times V\times V\lra C$ satisfying (\ref{eq:continuity-nonaut-introduction}) with the same constants $\beta, \alpha$ and $M>0$ and a zero real sequence $(d_n)_{n\in\N}$ such that the following assumption holds:
\vskip0.5cm
${\bf (H_0)} \   |\fra(t;u,v)-\fra_n(t;u,v)| \le d_n \|u\|_V \|v\|_V, \ t\in [0,T], u,v\in V.$
\vskip0.5cm

\noindent For each $t\in[0,T]$ and $n\in\N,$ let $\A_n(t)\in\L(V,V')$ be the operator associated with $\fra_n(t;\cdot,\cdot)$ on $V'$ and consider the approximation Cauchy problems 
\begin{equation}\label{Approximation Abstract Cauchy problem}
\dot{u}_n (t)+\A_n(t)u_n(t)=f(t), \quad{a.e} \ \text on \quad [0,T], \quad u_n(0)=u_0, \ (n\in \N).
\end{equation}
Note that the maximal regularity space $MR_2(V,V')$ is continuously embedded into $C([0,T];H)$ \cite[p. 106]{Sho97}. Moreover, the result of Lions implies that $H$ coincides with the trace space, that is  \[H=Tr_2(V,V'):=\Big\{u(0)\mid u\in MR_2(V,V')\Big\}.\]

The following theorem is the main results of this section.
\begin{theorem}\label{proposition: uniform approx V'} Let $u, u_n\in MR_2(V,V')$ be the solutions of (\ref{Abstract Cauchy problem 0}) and (\ref{Approximation Abstract Cauchy problem}), respectively. Then the following inequalities 
	\[\|u_n-u\|_{MR_2(V,V')}\leq \const d_n \Big[ \|f\|_{L^2(0,T;V')}+\|u_0\|_H\Big],\ n\in\N, \qquad\text
	{ and}  \]
	\[\|u_n-u\|_{C([0,T],H)}\leq \const d_n \Big[ \|f\|_{L^2(0,T;V')}+\|u_0\|_H\Big],\ n\in\N  \]
hold for some positive constant $\const>0$  depending only on $M,\alpha,c_H$ and $T.$ The sequence  $(u_n)_{n\in\N}$ thus converges in $MR_2(V,V')\cap C([0,T],H)$ to $u$ uniformly on the data $f, u_0.$
\end{theorem}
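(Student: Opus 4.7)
The plan is to set $w_n := u_n - u$ and identify the Cauchy problem that $w_n$ itself solves, then apply Lions's Theorem~\ref{wellposedness in V'} to that difference problem. Subtracting the two equations yields
\begin{equation*}
\dot w_n(t) + \A_n(t) w_n(t) = \bigl(\A(t) - \A_n(t)\bigr) u(t), \qquad w_n(0) = 0,
\end{equation*}
so $w_n$ is the solution in $MR_2(V,V')$ of the non-autonomous Cauchy problem associated with the form $\fra_n$, with zero initial datum and with right-hand side $g_n(t) := (\A(t) - \A_n(t)) u(t)$.

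Next I would estimate $g_n$ in $L^2(0,T;V')$. For $v\in V$, assumption $(H_0)$ gives
\begin{equation*}
|\langle g_n(t), v\rangle| = |\fra(t;u(t),v) - \fra_n(t;u(t),v)| \le d_n \|u(t)\|_V \|v\|_V,
\end{equation*}
hence $\|g_n(t)\|_{V'} \le d_n \|u(t)\|_V$ and consequently $\|g_n\|_{L^2(0,T;V')} \le d_n \|u\|_{L^2(0,T;V)} \le d_n \|u\|_{MR_2(V,V')}$. Because $\fra_n$ satisfies \eqref{eq:continuity-nonaut-introduction} with the \emph{same} constants $\alpha,\beta,M$ as $\fra$, Theorem~\ref{wellposedness in V'} applies to $\fra_n$ with the same constant $c_0$ and gives
\begin{equation*}
\|w_n\|_{MR_2(V,V')} \le c_0 \|g_n\|_{L^2(0,T;V')} \le c_0\, d_n \|u\|_{MR_2(V,V')}.
\end{equation*}
Applying Theorem~\ref{wellposedness in V'} now to $u$ itself gives $\|u\|_{MR_2(V,V')} \le c_0 [\|f\|_{L^2(0,T;V')} + \|u_0\|_H]$; combining these two bounds yields the first inequality with $\const = c_0^2$.

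For the second inequality I would simply invoke the continuous embedding $MR_2(V,V') \hookrightarrow C([0,T],H)$ recalled in the excerpt (with embedding constant depending only on $T$ and $c_H$), which turns the bound on $\|u_n - u\|_{MR_2(V,V')}$ into the claimed bound on $\|u_n - u\|_{C([0,T],H)}$. There is essentially no obstacle here: the argument hinges entirely on the observation that $(H_0)$ allows us to bound the commutator term $(\A - \A_n)u$ in $L^2(0,T;V')$ by $d_n$ times a quantity already controlled by Lions's theorem; the mild subtlety to watch is that Lions's constant $c_0$ depends only on $\alpha,\beta,M,c_H$, so it is the \emph{same} for every $\fra_n$, which is exactly what makes the estimate uniform in $n$ and in the data $(f,u_0)$.
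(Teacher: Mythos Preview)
Your proof is correct and in fact more direct than the paper's own argument. The paper recasts the problem in operator form, writing $u=(\A+\B)^{-1}f$ and $u_n=(\A_n+\B)^{-1}f$ (with $\B$ the time derivative with zero trace) and then applies the resolvent identity
\[
u_n-u=(\A+\B)^{-1}(\A_n-\A)(\A_n+\B)^{-1}f
\]
in the case $u_0=0$; for $u_0\neq 0$ it reduces to the previous case via a lifting $\vartheta\in MR_2(V,V')$ with $\vartheta(0)=u_0$ and $\|\vartheta\|_{MR_2}\le 2\|u_0\|_H$. Your observation that $w_n=u_n-u$ automatically has zero initial datum, regardless of $u_0$, collapses both cases into one and lets you apply Lions's theorem directly to the difference equation. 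The two arguments are really the same resolvent identity read in two ways (yours corresponds to $u_n-u=(\A_n+\B)^{-1}(\A-\A_n)u$), but your formulation avoids the operator machinery and the separate lifting step; the paper's formulation, on the other hand, makes the stability estimate look like a standard perturbation-of-inverses argument, which generalises more transparently to other maximal-regularity settings.
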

\begin{proof} For simplicity,  we will in the sequel denote all positive constants depending on $M,\alpha,c_H$ and $T$ by $\const>0.$ In view of the above  Remark, it suffices to prove the first inequality. To that purpose, consider the unbounded linear operators $\A, \A_n$ and $\B$ with domains $D(\A)=D(\A_n)=L^2(0,T;V)$ and $D(\mathcal B)=\big\{u\in
	H^1(0,T; V'), u(0)=0\big\}$ defined by
	\[(\A f)(t)=\A_n(t)f(t),\ \ (\A_n f)(t)=\A_n(t)f(t)\ \ \hbox{   and  } \ \ (\mathcal B u)(t)=\dot{u}(t) \ \hbox{ for almost every } t\in[0,T].\]
Thus the Cauchy problem (\ref{Abstract Cauchy problem 0}), respectively (\ref{Approximation Abstract Cauchy problem}),  has $L^2-$maximal regularity in $V'$ if and only if
	the unbounded operator $\A+\B,$ respectively $\A_n+\B,$ with domain \[D(\A+\B)=D(\A_n+\B):=\{ u\in MR_2(V,V')\mid u(0)=0\}\] is invertible. 
Consider first the case where $u_0=0.$ Then we have $u=(\A+\B)^{-1}f$ and  $u_n=(\A_n+\B)^{-1}f.$ From Theorem \ref{wellposedness in V'} and $\bf(H_0)$ we have
\begin{align*}
\|u_n-u\|_{MR_2(V,V')}&=\|(\A+\B)^{-1}f-(\A_n+\B)^{-1}f\|_{MR_2(V,V')}
\\&=\|(\A+\B)^{-1}(\A_n-\A)(\A_n+\B)^{-1}f\|_{MR_2(V,V')}
\\&\leq \const d_n\|f\|_{L^2(0,T;V')}.
\end{align*}
 Let now $0\neq u_0\in H.$ Choose $\vartheta\in MR_2(V,V')$ such that $\vartheta(0)=u_0$ and $\|\vartheta\|_{MR_2(V,V')}\leq 2\|u_0\|_{H}.$ Set  $g_n:=-\dot{\vartheta}(\cdot)-\A_n(\cdot)\vartheta(\cdot)+f(\cdot)$ and
$g:=-\dot{\vartheta}(\cdot)-\A(\cdot)\vartheta(\cdot)+f(\cdot)\in L^2(0,T;V').$ Then there exist $v_n,u\in MR_2(V,V')$ such that
\begin{equation*}
\dot{v}_n(t)+\A_n(t)v_n(t)=g_n(t)\quad{a.e} \quad\text on \quad [0,T],\
\ \ \ \ v_n(0)=0, \
\end{equation*}
and 
\begin{equation*}
\dot{v}(t)+\A(t)v(t)=g(t) \quad{a.e} \quad\text on \quad [0,T],\
\ \ \ \ v(0)=0. \
\end{equation*}
By the uniqueness of solvability, $u_n=v_n+\vartheta$ and $u=v+\vartheta.$ Therefore, using the result of the first part of the proof we obtain
\begin{align*}
\|u_n-u\|_{MR_2(V,V')}=&\|v_n-v\|_{MR_2(V,V')}=\|(\A_n+\B)^{-1}g_n-(\A+\B)^{-1}g\|_{MR_2(V,V')}
\\&\leq \|(\A_n+\B)^{-1}(g_n-g)\|_{MR_2(V,V')}+\|(\A_n+\B)^{-1}g-(\A+\B)^{-1}g\|_{MR_2(V,V')}
\\&\leq \const\|\A_n\vartheta-\A\vartheta\|_{L^2(0,T;V')}+\const d_n\|g\|_{L^2(0,T;V')}
\\&\leq  \const d_n \Big[\|\vartheta\|_{L^2(0,T;V)}+\|\dot\vartheta\|_{L^2(0,T;V')}+\|f\|_{L^2(0,T;V')}\Big]
\\&\leq \const d_n \Big[\|u_0\|_{H}+\|f\|_{L^2(0,T;V')}\Big].
\end{align*}
 This completes the proof.
\end{proof}
Recall that the non-autonomous Cauchy problem (\ref{Abstract Cauchy problem 0}) is said to have $L^2$-maximal regularity in $H$ if for each $u_0\in V$ and $f\in L^2(0,T;H)$ the solution $u$ belongs to $MR_2(V,H):=L^2(0,T;V)\cap H^1(0,T;H).$ The next results is an easy consequence of Theorem \ref{proposition: uniform approx V'}.

\begin{corollary} Assume that the approximation problems \ref{Approximation Abstract Cauchy problem} has $L^2$-maximal regularity in $H.$ Let $u\in V$ and $f\in L^2(0,T;H)$ and let $(u_n)_{n\in\N}$ the sequence of solutions of (\ref{Approximation Abstract Cauchy problem}). If $(u_n)_{n\in\N}$ converges weakly in $MR_2(V,H),$ then the limit problem (\ref{Abstract Cauchy problem 0}) has also $L^2$-maximal regularity in $H$ and $u$ is equal to the weak limits of $(u_n)_{n\in\N}.$

\end{corollary}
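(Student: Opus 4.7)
The strategy is to combine the strong convergence in $MR_2(V,V')$ guaranteed by Theorem \ref{proposition: uniform approx V'} with the identification of the hypothesized weak limit in $MR_2(V,H)$. Since a weak limit in $MR_2(V,H)$ is also a weak limit in the larger space $MR_2(V,V')$, and a strong limit is a weak limit, the two must coincide; this forces the solution $u$ of the limit problem to lie in $MR_2(V,H)$.

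First I would check that the hypotheses of Theorem \ref{proposition: uniform approx V'} are satisfied. The data $u_0 \in V \subset H$ and $f \in L^2(0,T;H) \subset L^2(0,T;V')$ fall within the framework, so that theorem yields a unique $u \in MR_2(V,V')$ solving (\ref{Abstract Cauchy problem 0}) and the estimate
\[
\|u_n - u\|_{MR_2(V,V')} \le \const\, d_n \bigl[\|f\|_{L^2(0,T;V')} + \|u_0\|_H\bigr] \longrightarrow 0.
\]
In particular $u_n \to u$ strongly, hence weakly, in $MR_2(V,V')$.

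Next I would exploit the continuous embedding $H \hookrightarrow V'$, which lifts to a continuous embedding $MR_2(V,H) \hookrightarrow MR_2(V,V')$. If $u_n \rightharpoonup w$ weakly in $MR_2(V,H)$, then for every $\varphi \in MR_2(V,V')'$ the restriction $\varphi|_{MR_2(V,H)}$ lies in $MR_2(V,H)'$, so $\varphi(u_n) \to \varphi(w)$; thus $u_n \rightharpoonup w$ weakly in $MR_2(V,V')$. By uniqueness of weak limits in $MR_2(V,V')$ we conclude $w = u$, and consequently $u \in MR_2(V,H)$. Since the Cauchy problem (\ref{Abstract Cauchy problem 0}) with data $(u_0,f)$ has its unique solution in $MR_2(V,H)$, this is exactly the $L^2$-maximal regularity assertion.

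There is no real obstacle here beyond keeping the topologies straight: the only delicate point is to notice that weak convergence in the smaller maximal regularity space automatically transfers to the larger one, so that the strong limit provided by Theorem \ref{proposition: uniform approx V'} can be matched with the weakly convergent subsequence without extracting subsequences or appealing to further compactness.
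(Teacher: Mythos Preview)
Your proposal is correct and matches the paper's approach: the paper states only that the corollary ``is an easy consequence of Theorem \ref{proposition: uniform approx V'}'' without giving further details, and the argument you spell out---strong convergence in $MR_2(V,V')$ from Theorem \ref{proposition: uniform approx V'}, combined with the fact that the assumed weak limit in $MR_2(V,H)$ is also a weak limit in $MR_2(V,V')$, forcing the two limits to coincide---is precisely the natural way to fill this in.
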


\section{$L^2$-maximal regularity in $H:$ a weak approximation\label{weak approximation H}}
\noindent Let $\fra, \fra_n:[0,T]\times V\times V\lra \C$  be a closed non-autonomous forms  satisfying (\ref{eq:continuity-nonaut-introduction})  with the same constants $\beta, \alpha$ and $M>0.$ 
In this section we assume that there exist $0\leq \gamma< 1,$  a sequence of non-decreasing continuous  function  $\omega_n:[0,T]\longrightarrow [0,+\infty), n\in\N,$ and zero real sequence $(d_n)_{n\in\N}$ such that the following assumptions hold.
\begin{align*}
  &\mathbf{(  H_1)} \quad  |\fra_n(t,u,v)-\fra(t,u,v)| \le d_n \Vert u\Vert_{V} \Vert v\Vert_{V_\gamma}, \ t\in[0,T], \ u,v\in V,
 	\\  &\mathbf{(  H_2)} \quad  |\fra_n(t,u,v)-\fra_n(s,u,v)| \le\omega_n(|t-s|) \Vert u\Vert_{V} \Vert v\Vert_{V_\gamma}, \ t\in[0,T], \ u,v\in V,
\\ &\mathbf{(  H_3)} \quad  \displaystyle\sup_{t\in[0,T],n\in \N} \frac{\omega_n(t)}{t^{\gamma/2}}<\infty,  \text{ and }  \  \ \displaystyle\sup_{n\in \N}\int_0^T\frac{\omega_n(t)}{t^{1+\gamma/2}} {\rm d}t<\infty\ \
 \\ &\mathbf{(  H_4)} \quad \text{The approximation problem (\ref{Approximation Abstract Cauchy problem}) has $L^2$-maximal regularity in $H$ for every } n\in\N. \end{align*}
where $V_\gamma:=[H,V]_\gamma$ is the complex interpolation space. Note that \[V\hookrightarrow V_\gamma \hookrightarrow H\hookrightarrow V_\gamma'\hookrightarrow V'\] with continuous embeddings. Remark that condition $\bf(H_2)$ implies that $\A_n(t)-\A_n(s)\in \L(V,V_\gamma')$ and
\begin{equation}\label{Eq0: Dini condition operators}
\|\A_n(t)-\A_n(s)\|_{\L(V,V_\gamma')}\leq \omega_n(|t-s|), \quad  t,s\in[0,T], n\in\N.
\end{equation}

\par The following proposition is of great interest for this paper.

\begin{proposition}\label{lemma: estimations for general from}\cite[Section 2]{Ar-Mo15} Let $b$ be any sesquilinear form that satisfies (\ref{eq:continuity-nonaut-introduction}) with the same constants $M$ and $\alpha$ and let $\gamma\in [0,1[.$ Let $\B$ and $B$ be the associated operators  on $V'$ and $H,$ respectively. Then there exists a constant $c>0$ which depends only on $M,\alpha, \gamma$ and $c_H$ such that
	
	\begin{enumerate}
		\item \label{Eq1: estimation resolvent}$\displaystyle \|(\lambda-\B)^{-1}\|_{\L(V_{\gamma}',H)}\leq \frac{c}{(1+\mid\lambda\mid)^{1-\frac{\gamma}{2}}},$
		\item \label{Eq2: estimation resolvent}$\displaystyle \|(\lambda-\B)^{-1}\|_{\L(V)}\leq \frac{c}{1+\mid\lambda\mid},$
		\item \label{Eq3: estimation resolvent}$\displaystyle \|(\lambda-\B)^{-1}\|_{\L(H,V)}\leq \frac{c}{(1+\mid\lambda\mid)^{\frac{1}{2}}},$
		\item \label{Eq4: estimation resolvent} $\displaystyle \|(\lambda-\B)^{-1}\|_{\L(V',H)}\leq \frac{c}{(1+\mid\lambda\mid)^{\frac{1}{2}}},$
		\item\label{Eq6: estimation resolvent} $\displaystyle \|(\lambda-\B)^{-1}\|_{\L(V'_\gamma,V)}\leq \frac{c}{(1+\mid\lambda\mid)^{\frac{1-\gamma}{2}}},$
		\item \label{Eq2: estimation  semigroup}
		$\displaystyle\|e^{-s\B}\|_{\L(V_\gamma',H)}\leq\frac{c}{s^{\gamma/2}},$
		\item \label{Eq3: estimation  semigroup}
		$\displaystyle\|e^{-sB}\|_{\L(V_\gamma',V)}\leq\frac{c}{s^{\frac{1+\gamma}{2}}},$
		\item \label{Eq4: estimation  semigroup}
		$\displaystyle\|e^{-s\B}\|_{\L(V',V)}\leq\frac{c}{s^{\frac{1}{2}}},$
		\item \label{analytic estimation}
		$\displaystyle\|Be^{-sB}\|_{\L(H)}\leq \frac{c }{s},$
		\item \label{analytic estimation in V}
		$\displaystyle\|e^{-sB}\|_{\L(V)}\leq c$
	\end{enumerate}
		for each $t\in[0,T], s\geq 0$ and $\lambda\notin \Sigma_\theta:=\{re^{i\varphi}: r>0, |\varphi|<\theta\}.$
\end{proposition}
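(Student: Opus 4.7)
The plan is to derive every item directly from the variational definition of $\B$. The resolvent estimates at the three endpoint spaces $V'$, $H$, $V$ come from testing the equation $\lambda u-\B u=f$ against $u$ and using the coercivity in (\ref{eq:continuity-nonaut-introduction}); the fractional cases involving $V_\gamma'$ then follow by complex interpolation via the identification $V_\gamma'=[H,V']_\gamma$ (the antidual of $V_\gamma=[H,V]_\gamma$, valid by reflexivity of Hilbert spaces). Semigroup estimates are obtained from the resolvent bounds by inserting them into the Dunford integral
\[
e^{-s\B}=\frac{1}{2\pi i}\int_{\Gamma}e^{s\lambda}(\lambda+\B)^{-1}\,d\lambda,
\]
with $\Gamma$ a sectorial contour around the spectrum of $-\B$, followed by the rescaling $\lambda=\mu/s$.

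\textbf{Resolvent estimates.} For $\lambda$ outside a suitable sector and $f\in V'$, set $u=(\lambda-\B)^{-1}f$ and test against $u$ to obtain
\[
\lambda\|u\|_H^{2}-b(u,u)=\langle f,u\rangle.
\]
Combining the real part with $\alpha\|u\|_V^{2}\le\Re b(u,u)+\beta\|u\|_H^{2}$ gives $\alpha\|u\|_V^{2}\le(\Re\lambda+\beta)\|u\|_H^{2}-\Re\langle f,u\rangle$. For $\lambda$ outside the sector this produces the uniform bound $\|u\|_V\le c\|f\|_{V'}$, while exploiting in addition the imaginary part of the testing identity yields the extra $(1+|\lambda|)^{1/2}$ decay and hence items (4) and (3). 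Item (2) follows either by duality (the adjoint form satisfies the same hypotheses with the roles of $V$ and $V'$ swapped) or from the standard analyticity estimate on $V'$, $\|(\lambda-\B)^{-1}\|_{\L(V')}\le c/(1+|\lambda|)$. Complex interpolation between these endpoints then delivers items (1) and (5): item (1) by interpolating the $\L(V',H)$ bound and the $\L(H)$ bound with parameter $\gamma$, and item (5) by interpolating the $\L(H,V)$ bound and the uniform $\L(V',V)$ bound.

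\textbf{Semigroup estimates and main obstacle.} Plugging items (1), (5), (4) into the Dunford integral and rescaling $\lambda=\mu/s$ produces items (6), (7), (8) with the claimed powers of $s$; the integrals converge because in each case the decay of the resolvent along $\Gamma$ is sufficient after the absorption of the requested power of $s$. Item (9) is the classical analyticity bound on $H$, obtained by multiplying $(\lambda+B)^{-1}$ by $\lambda$ under the contour and using that the resulting integrand is uniformly $L^{1}$. The genuine obstacle is item (10), $\|e^{-sB}\|_{\L(V)}\le c$: a direct Dunford-integral argument against item (2) would produce a logarithmically divergent integral, so I would invoke the Kato square-root identity $D((B+\beta)^{1/2})=V$ with equivalent norms, valid for any continuous coercive sesquilinear form on a Hilbert space, and then transfer the $\L(H)$-boundedness of $e^{-sB}$ to $\L(V)$ using the commutation of $e^{-sB}$ with $(B+\beta)^{1/2}$. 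This Kato identification is the deepest external ingredient; once it is in hand, all remaining items reduce to variational testing and routine contour-integral bookkeeping.
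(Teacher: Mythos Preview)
The paper does not supply its own proof of this proposition; it simply quotes the result from \cite[Section~2]{Ar-Mo15}. Your overall strategy---variational testing for the endpoint resolvent bounds, complex interpolation for the fractional spaces $V_\gamma'$, and the Dunford contour integral for the semigroup---is exactly the standard route, and it is essentially what Arendt and Monniaux do. Items~(1)--(9) are handled correctly by your outline.

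There is, however, a genuine gap in your treatment of item~(10). You claim that the Kato square-root identity $D((B+\beta)^{1/2})=V$ is ``valid for any continuous coercive sesquilinear form on a Hilbert space''. This is false: McIntosh constructed abstract counterexamples already in the 1970s, and the paper itself treats the square-root property as an extra \emph{hypothesis} in Section~5 (see the discussion around Proposition~\ref{square root property for the Linear-approximation}). So you cannot invoke it here, where $b$ is an arbitrary form satisfying only \eqref{eq:continuity-nonaut-introduction}.

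The fix is simpler than what you proposed and requires no square-root property at all. Observe that, viewed as an unbounded operator on $V'$, the generator $-\B$ has domain exactly $V$, and the graph norm $\|u\|_{V'}+\|\B u\|_{V'}$ is equivalent to $\|u\|_V$ (since $\B+\beta:V\to V'$ is an isomorphism). A bounded analytic semigroup always leaves the domain of its generator invariant and is uniformly bounded there in the graph norm: for $u\in V$,
\[
\|\B e^{-s\B}u\|_{V'}+\|e^{-s\B}u\|_{V'}=\|e^{-s\B}\B u\|_{V'}+\|e^{-s\B}u\|_{V'}\le c\bigl(\|\B u\|_{V'}+\|u\|_{V'}\bigr),
\]
which gives $\|e^{-s\B}u\|_V\le c\|u\|_V$ with $c$ depending only on $M,\alpha,c_H$ (and~$\beta$). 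This is how item~(10) is obtained in \cite{Ar-Mo15}; the rest of your sketch stands.
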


\begin{remark}\label{Remark: estimations for general from} All estimates in Proposition \ref{lemma: estimations for general from} holds for $\A_n(t)$ and $\A(t)$ with constant independent of $n$ and $t\in[0,T],$ since $\fra_n \fra$ satisfies (\ref{eq:continuity-nonaut-introduction}) with the same constants $M,\beta$ and $\alpha,$ also $\gamma$ and $c_H$  does not depend on $n$ and $t\in[0,T].$
\end{remark}
 \begin{notation} To keep notations simple as possible  we will in the sequel denote all positive constants depending on $M,\alpha, \gamma, c_H$ and $T$ that appear in proofs and theorems uniformly as $\const>0.$
\end{notation}
\medskip

For each $f\in L^2(0,T;H)$ and $u_0\in V,$  the solutions $u_n, n\in\N,$ of (\ref{Approximation Abstract Cauchy problem}) satisfies the following key formula
\begin{equation}\label{Formula for the solution}
u_n(t)=e^{-tA_n(t)}u_0+\int_{0}^te^{-(t-s)\A_n(t)}f(s){\rm  d}s+\int_{0}^te^{-(t-s)A_n(t)}(\A_n(t)-\A_n(s))u_n(s){\rm  d}s
\end{equation}
for all $t\in[0,T].$ This formula is due to Acquistapace and Terreni \cite{Ac-Ter87} and was proved in a more general setting in \cite[Proposition 3.5]{Ar-Mo15}. In the sequel we will use the following notations:

\begin{equation}\label{Formula for the solution 2}
u_{n,1}(t):=e^{-tA_n(t)}u_0, \qquad  u_{n,2}(t):=\int_{0}^t e^{-(t-s)A_n(t)}f(s){\rm  d}s.
\end{equation}
With this notation we can state the main result of this section which, in particular, shows that the limit problem  (\ref{Abstract Cauchy problem 0}) also has $L^2$-maximal regularity in $H.$ 
\begin{theorem}\label{main theorem:l2 max reg in H} Assume that the assumptions $\bf (H_1)$-$\bf (H_4)$ holds. Then the  problem  (\ref{Abstract Cauchy problem 0}) also has $L^2$-maximal regularity in $H.$ Moreover, if $f\in L^2(0,T;H)$ and $u_0\in V$ and  $(u_n)_{n\in\N}\subset MR_2(V,H)$ is the sequence of
	the unique solutions of (\ref{Approximation Abstract Cauchy problem}), then $(u_n)_{n\in\N}$ converges weakly in $MR_2(V,H)$ and $u:={\rm w}-\lim\limits_{n\to \infty}u_n$ satisfies (\ref{Abstract Cauchy problem 0}).
\end{theorem}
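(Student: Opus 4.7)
The plan is to combine a uniform a priori $MR_2(V,H)$-bound on $(u_n)_{n\in\N}$ with the $MR_2(V,V')$-convergence from Theorem~\ref{proposition: uniform approx V'}, and then identify the weak limit using uniqueness.

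First I would establish $\|u_n\|_{MR_2(V,H)}\le \const\bigl[\|u_0\|_V+\|f\|_{L^2(0,T;H)}\bigr]$ uniformly in $n$. Since $f\in L^2(0,T;H)\hookrightarrow L^2(0,T;V')$ and $u_0\in V\hookrightarrow H$, Theorem~\ref{wellposedness in V'} already provides the $L^2(0,T;V)$-part of the bound, so it suffices to control $\|\A_n(\cdot)u_n\|_{L^2(0,T;H)}$ independently of $n$ and then read off $\dot u_n=f-\A_n u_n$. To this end, apply $A_n(t)$ to the Acquistapace--Terreni representation~(\ref{Formula for the solution}). The two autonomous contributions $A_n(t)u_{n,1}(t)$ and $A_n(t)u_{n,2}(t)$ are handled by classical autonomous $L^2$-maximal regularity for the frozen generator $A_n(t)$, with bounds uniform in $n,t$ thanks to Remark~\ref{Remark: estimations for general from}. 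For the commutator term I would combine~(\ref{Eq0: Dini condition operators}) with the estimate $\|A_n(t)e^{-sA_n(t)}\|_{\L(V_\gamma',H)}\le \const\,s^{-1-\gamma/2}$ (obtained by splitting the semigroup into two halves and invoking items~(\ref{analytic estimation}) and~(\ref{Eq2: estimation  semigroup}) of Proposition~\ref{lemma: estimations for general from}), to obtain
\[\Bigl\|\int_0^t A_n(t)e^{-(t-s)A_n(t)}(\A_n(t)-\A_n(s))u_n(s)\,\d s\Bigr\|_H\le \const\int_0^t\frac{\omega_n(t-s)}{(t-s)^{1+\gamma/2}}\,\|u_n(s)\|_V\,\d s.\]
By~$\mathbf{(H_3)}$ the kernel $r\mapsto \omega_n(r)/r^{1+\gamma/2}$ has $L^1(0,T)$-norm bounded uniformly in $n$, so a Young-type convolution estimate together with the uniform $L^2(0,T;V)$-bound yields the desired uniform control of $\A_n u_n$ in $L^2(0,T;H)$.

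Second, since $MR_2(V,H)$ is a Hilbert space, the uniform bound provides a subsequence $u_{n_k}\rightharpoonup v$ weakly in $MR_2(V,H)$, hence also weakly in $MR_2(V,V')$ via the continuous inclusion. To identify $v$, observe that~$\mathbf{(H_1)}$ together with $V\hookrightarrow V_\gamma$ implies assumption~$\mathbf{(H_0)}$ up to an embedding constant, so Theorem~\ref{proposition: uniform approx V'} applies and gives $u_n\to u$ strongly in $MR_2(V,V')$, where $u$ is the unique $V'$-solution of~(\ref{Abstract Cauchy problem 0}) furnished by Theorem~\ref{wellposedness in V'}. Uniqueness of limits forces $v=u$, and in particular $u\in MR_2(V,H)$, which is precisely $L^2$-maximal regularity in $H$ for the limit problem. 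A standard Urysohn subsequence argument upgrades the conclusion to weak convergence of the full sequence $u_n\rightharpoonup u$ in $MR_2(V,H)$.

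The main obstacle is the uniform-in-$n$ control of the commutator term: the whole purpose of~$\mathbf{(H_2)}$ and~$\mathbf{(H_3)}$ is to trade one derivative against the $V_\gamma'$-singularity of $A_n(t)e^{-sA_n(t)}$ while keeping the resulting convolution kernel integrable, and this must be carried out with constants independent of $n$. The underlying singular-integral estimates are those of \cite{Ar-Mo15}; what is new here is to apply them \emph{uniformly} along the approximating family $\fra_n$, which is legitimate because the constants in Proposition~\ref{lemma: estimations for general from} depend only on $\alpha,\beta,M,c_H,\gamma$, parameters common to every member of the family.
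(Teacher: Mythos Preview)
Your proposal is correct and reaches the same conclusion, but the route to the uniform $MR_2(V,H)$-bound differs from the paper's. The paper rewrites $\A_n u_n=(I-Q_n)^{-1}(\A_n u_{n,1}+\A_n u_{n,2})$, where $Q_n$ is the operator of Lemma~\ref{Lemma: Invertibility of I-QLambda} acting on $g=\A_n u_n$, and uses the smallness $\|Q_n^\mu\|<1$ (uniform in $n$, for $\mu$ large) to invert via a Neumann series. You instead bound the commutator term directly in $L^2(0,T;H)$ by $\const\,\|u_n\|_{L^2(0,T;V)}$ and then feed in the uniform $L^2(0,T;V)$-bound already delivered by Lions' Theorem~\ref{wellposedness in V'}. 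This shortcut is legitimate and more elementary for the present statement: it avoids the invertibility argument entirely. The paper's detour through $Q_n$ pays dividends only later, in Section~\ref{Uniform Convergence}, where the difference $Q_n-Q_m$ is the central object for the strong-convergence estimate~(\ref{estimation of the error of the convergence for Q}). One minor imprecision in your write-up: describing the bounds on $\A_n u_{n,1}$ and $\A_n u_{n,2}$ as ``autonomous maximal regularity for the frozen generator'' is slightly misleading, since $\A_n(t)$ varies with the outer variable $t$; what is really used is the pseudo-autonomous estimate of \cite[Theorem~4.1]{Ar-Mo15} recorded here as Lemma~\ref{Lemma: The uniform  Boundedness in $L^2(0,T;H)$}, which---as you correctly observe---holds with constants depending only on $M,\alpha,\gamma,c_H$.
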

\noindent For the proof we need first some preliminary lemmas. Using the same argument as in the proof of \cite[Theorem 4.1]{Ar-Mo15}, the next two lemmas follow thanks to $\bf (H_1)$-$\bf (H_3)$ and Remark \ref{Remark: estimations for general from}. 

\begin{lemma}\label{Lemma: Invertibility of I-QLambda} Assume that the assumptions $\bf (H_1)$-$\bf (H_4)$ holds.
	Let $Q_n^\mu:L^2(0,T;H)\to L^2(0,T;H)$ denotes the linear operator  defined for all $g\in L^2(0,T;H)$ and $\mu\geq 0$ by
	\begin{equation}\label{def l operator Q}
	(Q_n^\mu g)(t):=\int_{0}^t(\A_n(t)+\mu)e^{-(t-s)(\A_n(t)+\mu)}(\A_n(t)-\A_n(s))(\A_n(s)+\mu)^{-1}g(s){\rm  d}s\ \quad t\textrm{-a.e}.
	\end{equation}
	Then $\lim\limits_{\mu \to \infty}\|Q_n^\mu\|_{L^2(0,T;H)}=0$ uniformly on $n$ and thus $I-Q_n^\mu$ is invertible on $L^2(0,T;H)$ for $\mu$ large enough and for all $n.$
\end{lemma}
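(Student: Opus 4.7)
The plan is to estimate the operator norm of the integral kernel
\[
K_n^\mu(t,s) := (\A_n(t)+\mu)e^{-(t-s)(\A_n(t)+\mu)}(\A_n(t)-\A_n(s))(\A_n(s)+\mu)^{-1}
\]
in $\L(H)$ pointwise in $(t,s)$, then to recognise that $(Q_n^\mu g)(t)$ is controlled by a scalar convolution and conclude via Young's inequality together with hypothesis $\mathbf{(H_3)}$. For $\mu$ sufficiently large, $-\mu$ lies outside the sector $\Sigma_\theta$ appearing in Proposition~\ref{lemma: estimations for general from}, so the resolvent and semigroup estimates stated there apply directly with $\lambda=-\mu$, and by Remark~\ref{Remark: estimations for general from} the constants are uniform in $n$ and $t$.

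The first task is to decompose $K_n^\mu(t,s)$ into three factors and bound each one. From Proposition~\ref{lemma: estimations for general from}(3) applied with $\lambda=-\mu$ one gets $\|(\A_n(s)+\mu)^{-1}\|_{\L(H,V)} \le \const(1+\mu)^{-1/2}$; the middle factor satisfies $\|\A_n(t)-\A_n(s)\|_{\L(V,V_\gamma')} \le \omega_n(|t-s|)$ by (\ref{Eq0: Dini condition operators}), which is the operator form of $\mathbf{(H_2)}$; and I would then derive the smoothing estimate
\[
\|(\A_n(t)+\mu)e^{-r(\A_n(t)+\mu)}\|_{\L(V_\gamma',H)} \le \const\, e^{-\mu r}\, r^{-1-\gamma/2}
\]
by splitting the semigroup as $e^{-r(\cdot)}=e^{-(r/2)(\cdot)}\,e^{-(r/2)(\cdot)}$ and composing the analytic estimate of Proposition~\ref{lemma: estimations for general from}(9), giving a bound in $\L(H)$, with the $V_\gamma'\to H$ smoothing estimate (6). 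Multiplying the three bounds together yields
\[
\|K_n^\mu(t,s)\|_{\L(H)} \le \frac{\const}{(1+\mu)^{1/2}} \cdot \frac{\omega_n(t-s)\,e^{-\mu(t-s)}}{(t-s)^{1+\gamma/2}}.
\]

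Since the right-hand side depends on $(t,s)$ only through $t-s$, the map $t\mapsto \|(Q_n^\mu g)(t)\|_H$ is pointwise dominated by the convolution on $(0,T)$ of the scalar kernel $\phi_n^\mu(r):=\const(1+\mu)^{-1/2}\,\omega_n(r)\,e^{-\mu r}\,r^{-1-\gamma/2}$ with $\|g(\cdot)\|_H$. Young's inequality for convolutions then gives
\[
\|Q_n^\mu g\|_{L^2(0,T;H)} \le \|\phi_n^\mu\|_{L^1(0,T)}\,\|g\|_{L^2(0,T;H)},
\]
and by the second part of $\mathbf{(H_3)}$ the $L^1$-norm $\|\phi_n^\mu\|_{L^1(0,T)}$ is bounded by $\const(1+\mu)^{-1/2}$ uniformly in $n$. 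Hence $\|Q_n^\mu\|_{\L(L^2(0,T;H))}\to 0$ as $\mu\to\infty$ uniformly in $n$, and $I-Q_n^\mu$ is invertible on $L^2(0,T;H)$ via the Neumann series for every sufficiently large $\mu$.

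The main obstacle is the smoothing bound for $(\A_n(t)+\mu)e^{-r(\A_n(t)+\mu)}$ from $V_\gamma'$ to $H$, which couples analyticity (time regularity) with the $V_\gamma'\to H$ regularisation of the semigroup, while simultaneously keeping the correct exponential decay in $\mu$; this is what ultimately produces the singular-but-integrable kernel in $r$ that $\mathbf{(H_3)}$ is tailored to handle. Everything else is a routine composition of the estimates collected in Proposition~\ref{lemma: estimations for general from} and a convolution inequality.
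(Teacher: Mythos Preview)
Your argument is correct and follows essentially the same approach the paper appeals to: the paper does not spell out a proof but refers to \cite[Theorem~4.1]{Ar-Mo15}, whose method is precisely the one you outline --- factor the kernel, apply the resolvent/semigroup estimates of Proposition~\ref{lemma: estimations for general from} (uniform in $n$ by Remark~\ref{Remark: estimations for general from}), and reduce to a scalar convolution controlled via Young's inequality and $\mathbf{(H_3)}$. One cosmetic point: after splitting $e^{-r(\A_n(t)+\mu)}=e^{-(r/2)(\A_n(t)+\mu)}e^{-(r/2)(\A_n(t)+\mu)}$ and using Proposition~\ref{lemma: estimations for general from}(\ref{analytic estimation}) on the first factor (which is uniform in $\mu$ but does not itself carry exponential decay), the surviving exponential weight is $e^{-\mu r/2}$ rather than $e^{-\mu r}$; this is harmless since, as you note, the decay $\|Q_n^\mu\|\to 0$ already comes from the factor $(1+\mu)^{-1/2}$ contributed by $(\A_n(s)+\mu)^{-1}$, and the exponential can even be dropped.
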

\begin{lemma} \label{Lemma: The uniform  Boundedness in $L^2(0,T;H)$} Assume that the assumptions $\bf (H_1)$-$\bf (H_3)$ holds. The following tow estimates 
	\begin{align*}
	\|\A_n u_{n,1}\|_{L^2(0,T;H)}&\leq  \const \|u_0\|_{V}^2,
	\\
	\|\A_n u_{n,2}\|_{L^2(0,T;H)}&\leq \const\|f\|_{L^2(0,T;H)}
	\end{align*}
	hold.
\end{lemma}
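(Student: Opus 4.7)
My plan is to follow the strategy of the proof of Theorem~4.1 of \cite{Ar-Mo15}, using the uniform resolvent and semigroup bounds of Proposition~\ref{lemma: estimations for general from} (independent of $n$ and $t$ by Remark~\ref{Remark: estimations for general from}) to reduce both estimates to autonomous $L^2$-maximal regularity for a sectorial operator on a Hilbert space (de~Simon's theorem).

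For the second bound, I first note that for every fixed $r\in[0,T]$ the autonomous convolution
\[
(J_r g)(t):=\int_0^t A_n(r)\, e^{-(t-s)A_n(r)} g(s)\,\mathrm{d}s
\]
is bounded on $L^2(0,T;H)$ with norm $\leq\const$ independent of $n$ and $r$, because $-A_n(r)$ generates a bounded holomorphic semigroup on the Hilbert space $H$ uniformly in $r,n$ by Remark~\ref{Remark: estimations for general from}. Since $\A_n u_{n,2}(t)$ is the diagonal value $(J_t f)(t)$, I would then use the Dunford--Cauchy representation
\[
A_n(t)\,e^{-rA_n(t)}=-\frac{1}{2\pi i}\int_{\Gamma}\lambda\,e^{-r\lambda}(\lambda-A_n(t))^{-1}\,\mathrm{d}\lambda
\]
on an appropriate sectorial contour $\Gamma$, insert it into the definition of $u_{n,2}(t)$, exchange the orders of integration, and apply the resolvent bound $\|(\lambda-A_n(t))^{-1}\|_{\L(H)}\leq c/(1+|\lambda|)$ from Proposition~\ref{lemma: estimations for general from}(\ref{Eq2: estimation resolvent}) together with Minkowski's inequality in the contour variable $\lambda$ to transfer the frozen $L^2$-maximal regularity bound to the diagonal.

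For the first bound, the analogous representation
\[
A_n(t)\,e^{-tA_n(t)}u_0=-\frac{1}{2\pi i}\int_\Gamma \lambda\,e^{-t\lambda}(\lambda-A_n(t))^{-1}u_0\,\mathrm{d}\lambda
\]
combined with $\|(\lambda-A_n(t))^{-1}\|_{\L(H,V)}\leq c/(1+|\lambda|)^{1/2}$ from Proposition~\ref{lemma: estimations for general from}(\ref{Eq3: estimation resolvent}) (uniform in $n,t$ by Remark~\ref{Remark: estimations for general from}) and an $L^2_t$-Minkowski computation in the contour parameter produces the bound $\const \|u_0\|_V$; the exponential factor $e^{-t\lambda}$ provides the decay needed to make the contour integral $L^2$ in the outer variable.

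The chief obstacle in both cases is that the generator $A_n(t)$ sitting inside the semigroup depends on the very variable $t$ over which the $L^2$-norm is computed, so the standard square-function or autonomous maximal-regularity results cannot be applied directly to these diagonal integrals. The role of the Dunford contour representation is precisely to decouple the two roles of $t$: after Fubini, the exponential factor is scalar, while the only operator-valued object remaining is $(\lambda-A_n(t))^{-1}$, whose $t$-uniform operator norms guaranteed by Remark~\ref{Remark: estimations for general from} let the outer $t$-integration be performed without loss and yield constants $\const$ depending only on $M,\alpha,\gamma,c_H$ and $T$.
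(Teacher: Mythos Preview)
Your plan has a genuine gap: the contour--Minkowski argument you describe produces a divergent integral at the critical scale, and this cannot be repaired without the Dini hypotheses $\mathbf{(H_2)}$--$\mathbf{(H_3)}$, which your outline never invokes.

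Concretely, carry out your scheme for $u_{n,2}$. After inserting the Dunford representation, exchanging integrals and applying Minkowski in the contour variable, the only available $t$--uniform bound is $\|(\lambda-A_n(t))^{-1}\|_{\L(H)}\le c/(1+|\lambda|)$. For fixed $\lambda\in\Gamma$ the inner expression is a scalar convolution with kernel $e^{-(\cdot)\Re\lambda}$, whose $L^1(0,T)$ norm is $\le 1/\Re\lambda$; Young's inequality then gives an $L^2_t$ contribution of order $|\lambda|\cdot\frac{c}{1+|\lambda|}\cdot\frac{1}{\Re\lambda}\sim\frac{1}{1+|\lambda|}$, and
\[
\int_\Gamma \frac{d|\lambda|}{1+|\lambda|}=\infty.
\]
The same logarithmic divergence occurs for $u_{n,1}$: writing $\lambda(\lambda-A_n(t))^{-1}u_0-u_0=(\lambda-\A_n(t))^{-1}\A_n(t)u_0$ and using $\|(\lambda-\A_n(t))^{-1}\|_{\L(V',H)}\le c/(1+|\lambda|)^{1/2}$ together with $\|e^{-\cdot\Re\lambda}\|_{L^2}\sim|\lambda|^{-1/2}$ again leaves $\int_0^\infty(1+r)^{-1}\,dr$. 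The exponential factor does \emph{not} supply enough decay here; in both cases you are exactly at the borderline where autonomous $L^2$--maximal regularity is a genuine (Fourier multiplier / square function) fact and cannot be recovered by absolute resolvent bounds alone.

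The argument in \cite[Theorem~4.1]{Ar-Mo15}, to which the paper refers, proceeds differently: one fixes a reference time (say $0$), writes the resolvent identity
\[
(\lambda-A_n(t))^{-1}=(\lambda-A_n(0))^{-1}+(\lambda-A_n(t))^{-1}\bigl(A_n(t)-A_n(0)\bigr)(\lambda-A_n(0))^{-1},
\]
and splits $\A_n u_{n,j}$ into an \emph{autonomous} piece governed by $A_n(0)$ plus a remainder. The autonomous piece is handled by de~Simon's theorem (for $u_{n,2}$) and the square--function estimate for $u_0\in V$ (for $u_{n,1}$), with constants depending only on $M,\alpha,c_H$ by Remark~\ref{Remark: estimations for general from}. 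The remainder is where $\mathbf{(H_2)}$ enters: the factor $A_n(t)-A_n(0)$ maps $V\to V_\gamma'$ with norm $\le\omega_n(t)$, and combining Proposition~\ref{lemma: estimations for general from}(\ref{Eq1: estimation resolvent}),(\ref{Eq3: estimation resolvent}) one gains an extra $(1+|\lambda|)^{-(1-\gamma)/2}$ in the contour integrand, producing an $L^1$ convolution kernel of order $(t-s)^{-(1+\gamma)/2}$ (integrable since $\gamma<1$). Hypothesis $\mathbf{(H_3)}$ then makes all resulting bounds uniform in $n$. Your proposal is missing precisely this comparison step; without it the ``decoupling'' you describe does not close.
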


\par Now we can give the proof of Theorem \ref{main theorem:l2 max reg in H}.

%
\begin{proof}(\textit{of Theorem} \ref{main theorem:l2 max reg in H}) According to Lemma \ref{Lemma: Invertibility of I-QLambda} and replacing $\A_n(t)$ with $\A_m(t)_n+\mu,$ we may assume  without loss of generality that $Q_n=Q^\mu_n$ satisfies $\|Q_n\|_{\L(L^2(0,T;H))}< 1,$ and then $I-Q_n$ is invertible by the Neumann series. We deduce from (\ref{Formula for the solution}) that
	\[\dot u_n=\A_n u_n=(I-Q_n)^{-1}(\A_n u_n^1+\A_n u_n^2).\]
	This equality and Lemma \ref{Lemma: The uniform  Boundedness in $L^2(0,T;H)$},  yield the estimate
	\begin{equation}
	\Vert \dot u_n\Vert_{L^2(0,T;H)}\leq {\bf c}\big[\Vert u_0\Vert_V+\Vert f\Vert_{L^2(0,T;H)}\big]
	\end{equation}
 Since for all $t\in [0,T]$ one has $u_n(t)=u_n(0)+\int_0^t \dot u_n(s){\rm d}s,$ we conclude that
	\begin{equation}
	\Vert u_n\Vert_{H^1(0,T;H)}\leq {\bf c}\big[\Vert u_0\Vert_V+\Vert f\Vert_{L^2(0,T;H)}\big].
	\end{equation}
	Then there exists a subsequence of $(u_n),$ still denoted by $(u_n)$ that converges weakly to some $v\in H^1(0,T;H)$
	\par\noindent On the other hand, the Cauchy problem (\ref{Abstract Cauchy problem 0}) has a unique solution $u\in MR_2(V,V'),$ and $(u_n)$ converges strongly to $u$ on $MR_2(V,V')$  by Theorem \ref{proposition: uniform approx V'}. We conclude by uniqueness of limits that $u= v\in H^1(0,T;H).$ This completes the proof.
\end{proof}
\section{$L^2$-maximal regularity in $H:$ uniform approximation \label{Uniform Convergence}}
\noindent Assume that  $\fra$ and $\fra_n$  are as in Section \ref{weak approximation H}. Let $(f,u_0)\in L^2(0,T;H)\times V$ and let $u, u_n\in MR_2(V,H)$ be the solutions of (\ref{Abstract Cauchy problem 0}) and (\ref{Approximation Abstract Cauchy problem}), respectively. In the previous section we have seen  that $(u_n)_{n\in\N}$ converges weakly to $u$ with respect to the norm of $MR_2(V,H).$ The aim of this
section is to prove that this convergence holds for the strong topology of $MR_2(V,H)$ and uniformly on the initial data $u_0$ and $f.$ To this end, we impose the following additional conditions:
\begin{align*}
 & {\bf (H_5)}  \lim\limits_{n\to \infty}d_nn^{\gamma/2}=0 \text{ and the sequence } (d_n)_{n\in\N} \text{ is decreasing}.\qquad \qquad \qquad \qquad\qquad\qquad\qquad
\\ & {\bf (H_6)}  \lim\limits_{n\to \infty} \displaystyle\int_{0}^{1/n}\frac{\omega_n(r)}{r^{{1+\gamma/2}}}  {\rm d}r= 0. 
\end{align*}

Recall that $-A_n(t)$ generates a holomorphic $C_0$-semigroup (of angle $\theta:=\frac{\pi}{2}-\arctan(\frac{M}{\alpha})$) $e^{-s A_n(t)}$ on $H$ which is the restriction to $H$ of $e^{-\cdot A_n(t)},$ and we have
\begin{equation}\label{analytic representation}
e^{-\cdot A_n(t)}=\frac{1}{2i\pi}\int_\Gamma e^{\cdot\mu} (\mu+A_n(t))^{-1}{\rm d}\mu
\end{equation}
where $\Gamma:=\{re^{\pm \varphi}:\ r>0\}$ for some fixed $\varphi\in (\theta,\frac{\pi}{2}).$

\begin{theorem}\label{convergence uniform} Assume that the assumptions $\bf (H_1)$-$\bf (H_6)$ holds. Then there exists a positive constant $\textbf{c}>0$ depending only on $M,\alpha,\gamma$ and $c_H$  such that 
	\begin{equation}\label{estimation of the error of the convergence2}
	\|\dot u-\dot u_n\|_{L^2(0,T;H)}\leq \textbf{c}\Big[(1+n^{\gamma/2})d_n+\int_{0}^{1/n}\frac{\omega_n(r)}{r^{{1+\gamma/2}}}  {\rm d}r\Big]
\Big[\|f\|_{L^2(0,T;H)}+\|u_0\|_V\Big].
	\end{equation}
Thus $(u_n)_{n\in\N}$ convergences to $u$ for the strong topology of $MR_2(V,H)$ and uniformly on the initial data $u_0$ and $f.$	
	
\end{theorem}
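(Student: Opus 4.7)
The strategy is to turn the weak-convergence argument of Theorem \ref{main theorem:l2 max reg in H} into a quantitative estimate for $\dot u-\dot u_n=\mathcal{A}_n u_n-\mathcal{A}u$ (the identity holds since both problems share the same inhomogeneity $f$). Since both $u$ and $u_n$ belong to $MR_2(V,H)$ by Theorem \ref{main theorem:l2 max reg in H}, I would subtract the Acquistapace--Terreni representation (\ref{Formula for the solution}) term by term for $u_n$ and $u$ and estimate the three resulting pointwise differences separately in $L^2(0,T;H)$: the initial-data difference $\mathcal{A}_n(t)e^{-tA_n(t)}u_0-\mathcal{A}(t)e^{-tA(t)}u_0$, the analogous forcing difference, and the commutator difference.

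For the first two terms, the main tool is the Dunford representation (\ref{analytic representation}) together with the resolvent identity
\[(\mu+A_n(t))^{-1}-(\mu+A(t))^{-1}=(\mu+A_n(t))^{-1}(A(t)-A_n(t))(\mu+A(t))^{-1}.\]
Hypothesis $\mathbf{(H_1)}$ bounds the middle factor by $d_n$ in $\mathcal{L}(V,V_\gamma')$, and the outer resolvents are controlled by estimates (1)--(3) of Proposition \ref{lemma: estimations for general from}, uniformly in $n$ and $t$ thanks to Remark \ref{Remark: estimations for general from}. Integration along $\Gamma$ then yields pointwise bounds of the form $\mathbf{c}\,d_n\,t^{-\gamma/2}\|u_0\|_V$ and $\mathbf{c}\,d_n\int_0^t(t-s)^{-\eta}\|f(s)\|_H\,ds$ for some $\eta<1$, which are square-integrable on $(0,T)$ since $\gamma<1$, giving an $L^2(0,T;H)$ contribution of order $d_n\big(\|u_0\|_V+\|f\|_{L^2(0,T;H)}\big)$.

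The commutator term is the principal obstacle. I would decompose its difference so as to isolate contributions governed by $\mathbf{(H_1)}$ (which produce a $d_n$ factor) and contributions governed by $\mathbf{(H_2)}$--$\mathbf{(H_3)}$ (which produce $\omega_n$ factors). The convolution against the singular kernel $\mathcal{A}_n(t)e^{-(t-s)A_n(t)}$, which by combining estimates (6) and (9) of Proposition \ref{lemma: estimations for general from} maps $V_\gamma'$ into $H$ with norm $\lesssim(t-s)^{-1-\gamma/2}$, is split at $|t-s|=1/n$: on the short piece one recovers precisely $\int_0^{1/n}\omega_n(r)/r^{1+\gamma/2}dr$ from $\mathbf{(H_6)}$, while on the long piece the pointwise bound $(t-s)^{-\gamma/2}\leq n^{\gamma/2}$ combined with the $d_n$ factor from $\mathbf{(H_1)}$ yields the $n^{\gamma/2}d_n$ contribution. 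The self-referential term $\mathcal{A}_n w_n$ (with $w_n:=u_n-u$) that re-appears from the commutator is moved back to the left-hand side by invoking the invertibility of $I-Q_n^\mu$ from Lemma \ref{Lemma: Invertibility of I-QLambda} for $\mu$ sufficiently large. The most delicate point will be the precise bookkeeping of the splitting at scale $1/n$ so that all constants stay independent of $n$ and exactly the factors $(1+n^{\gamma/2})d_n$ and $\int_0^{1/n}\omega_n(r)/r^{1+\gamma/2}dr$ emerge; once (\ref{estimation of the error of the convergence2}) is established, the strong convergence in $MR_2(V,H)$ uniformly in $(u_0,f)$ follows from $\mathbf{(H_5)}$--$\mathbf{(H_6)}$ together with Theorem \ref{proposition: uniform approx V'}.
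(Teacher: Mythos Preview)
Your strategy captures the right technical ingredients---the resolvent identity under the Dunford integral, the $V_\gamma'$--bound from $\mathbf{(H_1)}$, and above all the splitting of the singular convolution kernel at scale $1/n$ to produce precisely the factors $n^{\gamma/2}d_n$ and $\int_0^{1/n}\omega_n(r)r^{-1-\gamma/2}\,dr$. These are exactly the mechanisms the paper exploits.

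The point of departure is that you compare $u_n$ directly with the limit $u$, subtracting the Acquistapace--Terreni representation \eqref{Formula for the solution} for both. The paper instead compares $u_n$ with $u_m$ (setting $n=m+k$), shows $(\dot u_n)$ is Cauchy in $L^2(0,T;H)$ with the explicit rate, and then identifies the limit with $u$ via Theorem~\ref{proposition: uniform approx V'} before letting $k\to\infty$. The reason this matters is that formula \eqref{Formula for the solution}---and with it the operator $Q$, whose analogue you implicitly need when you ``move the self-referential term to the left''---is only stated for the approximating forms $\fra_n$, which satisfy the modulus-of-continuity hypothesis $\mathbf{(H_2)}$. No such hypothesis is imposed on $\fra$ itself: the assumptions $\mathbf{(H_1)}$--$\mathbf{(H_6)}$ give you $|\fra(t,u,v)-\fra(s,u,v)|\le (2d_n+\omega_n(|t-s|))\|u\|_V\|v\|_{V_\gamma}$ for every $n$, but the naive infimum over $n$ need not yield a Dini-integrable modulus (the constant $2d_n$ contributes a non-integrable $t^{-1-\gamma/2}$ tail). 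So your use of \eqref{Formula for the solution} for $u$, and hence the existence and contractivity of a limiting operator $Q$, is not justified by the standing hypotheses. The paper's Cauchy detour sidesteps this entirely: it only ever invokes \eqref{Formula for the solution}, $Q_n$, and $Q_m$ for the approximants, where $\mathbf{(H_2)}$--$\mathbf{(H_3)}$ apply directly, and the algebra you sketch (decomposition into $I_{n,m,1}$, $I_{n,m,2}$, $I_{n,m,3}$ for $Q_n-Q_m$, plus the resolvent-difference estimates for $\mathcal A_n u_{n,1}-\mathcal A_m u_{m,1}$ and the forcing term) is carried out between indices $n$ and $m$ rather than between $n$ and $\infty$.
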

\begin{proof} We only have to prove (\ref{estimation of the error of the convergence2})  the uniform convergence with respect to $u_0,f$ in $MR_2(V,H)$ becomes obvious. Indeed, we known from Theorem \ref{proposition: uniform approx V'} that $u_n \longrightarrow u$ in $L^2(0,T;V)$ uniformly on the initial data $u_0$ and the homogeneity $f.$
\par\noindent We will use the representation formula (\ref{Formula for the solution}) and (\ref{Formula for the solution 2}). We proceed by several steps. Let $m,k\in\N$ and set $n:=m+k$ and $d_{n,m}:=d_n+d_m.$
	\par $(a)$ First, we estimate  $\A_n u_{n,1}-\A_m u_{m,1}$  in $L^2(0,T;H).$ Let $t\ne 0.$ Using $\bf (H_1)$ we obtain  and the estimates (\ref{Eq2: estimation  semigroup}) and  (\ref{analytic estimation in V}) in Proposition \ref{lemma: estimations for general from} that
	\begin{align}
	\nonumber\|\A_n(t) u_{n,1}(t)-&\A_m(t)u_{m,1}(t)\|_H=\|\A_n(t)e^{-t \A_n(t)}u_0-\A_m(t)e^{-t \A_m(t)}u_0\|_H
	\\\nonumber&\leq  \|e^{-t \A_n(t)}[\A_n(t)u_0-\A_m(t)u_0]\|_H+
	\|[e^{-t \A_n(t)}-e^{-t \A_m(t)}]\A_n(t)u_0\|_H
	\\\nonumber&=\|e^{-t \A_n(t)}[\A_n(t)u_0-\A_m(t)u_0]\|_H+\int_0^t \|e^{-(t-s) \A_n(t)}(\A_n(t)-\A_m(t))e^{-s \A_m(t)}u_0\|_H
	\\&\label{eq1: proof of Theorem on convergence uniform}  \leq \const d_{n,m}\left(\frac{1}{t^{\gamma/2}}+\int_0^t\frac{1}{s^{\gamma/2}}\rm{ ds}\right)\|u_0\|_{V}.
	\end{align}
	Similarly, combining the estimates (\ref{Eq1: estimation resolvent}) and (\ref{Eq3: estimation resolvent}) in Proposition \ref{lemma: estimations for general from} and  the estimate (\ref{Eq2:Dini condition operators}) in Proposition \ref{lemma: estimations for general from} we obtain
	\begin{align}\nonumber
	\|\A_n(t) &u_{n,2}(t)-\A_m(t)u_{2}(t)\|_H
	\\\nonumber\leq\int_0^t& \|[\A_n(t)e^{-(t-s)A_n(t)}-\A_m(t)e^{-(t-s)A(t)}]f(s)\|_Hds
	\\\nonumber&\leq\frac{1}{2\pi}\int_{0}^t\int_{\Gamma}\mid\lambda\mid e^{-(t-s)\Re\lambda}\|(\lambda-\A_n(t))^{-1}(\A_n(t)-\A_m(t))(\lambda-\A_m(t))^{-1}f(s)\|_{H}{\rm  d}\lambda{\rm  d}s
	\\\nonumber&\leq \const\int_{0}^td_{n,m}\int_{\Gamma}
	\frac{e^{-(t-s)\Re\lambda}}{\mid\lambda\mid^{\frac{1-\lambda}{2}}}\|f(s)\|_{H}{\rm  d}\lambda{\rm  d}s
	\\\nonumber&=\const d_{n,m}\int_{0}^t \|f(s)\|_{H}\int_{0}^\infty
	\frac{e^{-(t-s)r\cos(\nu)}}{r^{\frac{1-\gamma}{2}}}{\rm  d}r{\rm  d}s
	\\	\nonumber&=\const d_{n,m}\int_{0}^t \|f(s)\|_{H}\int_{0}^\infty\frac{e^{-\rho\cos(\theta)}}{(\frac{\rho}{t-s})^{-\frac{1+\gamma}{2}}}{{\rm  d}\rho}{\rm  d}s
	\\&=\label{eq2: proof of Theorem on convergence uniform}
\const d_{n,m}\int_{0}^\infty
	\frac{e^{-\rho\cos(\nu)}}{\rho^{\frac{1-\gamma}{2}}}{\rm  d\rho}\int_{0}^t\|f(s)\|_{H}(t-s)^{-\frac{1+\gamma}{2}}{\rm  d}s.
	\end{align}
	The last integral is well defined since the function $h:\R\to \R$ given by $h(t)=t^{-\frac{1+\gamma}{2}}$ for $t\in]0,T]$ and $h(t)=0$ for $t\in ]-\infty,0]\cap]T,+\infty[$  belongs to $L^1(\R)$ because $\frac{1+\gamma}{2}<1.$ The estimates (\ref{eq1: proof of Theorem on convergence uniform}) and (\ref{eq2: proof of Theorem on convergence uniform}) yield, respectively,
	\begin{equation*}
	\|\A_n u_{n,1}-\A_m u_{m,1}\|_{L^2(0,T;H)}\leq\const d_{n,m}\|u_0\|_{V}
	\end{equation*}
	and
	\begin{equation*}\label{estimation error Au2-Au2lambda}
	\|\A_n u_{n,2}-\A_m u_{m,2}\|_{L^2(0,T;H)}\leq \const d_{n,m}\|f\|_{L(0,T;H)}.
	\end{equation*}

	\par\noindent $(b)$ Next, we  prove  the following estimate
	\begin{equation}\label{estimation of the error of the convergence for Q}
	\|Q_n -Q_m\|_{\L(L^2(0,T;H))}\leq \const\left[ d_{n,m}+m^{\gamma/2}d_m+ n^{\gamma/2}d_n+\int_{0}^{1/n}\frac{\omega_n(r)}{r^{{1+\gamma/2}}}  {\rm d}r+\int_{0}^{1/m}\frac{\omega_m(r)}{r^{{1+\gamma/2}}}  {\rm d}r\right]
	\end{equation}
	where $Q_m:L^2(0,T;H)\longrightarrow L^2(0,T;H)$ is defined  (\ref{def l operator Q}). To this end,  for  $g\in L^2(0,T;H)$ and $t\in [0,T]$ we write 
	
	\begin{align*}\label{eq1: proof strong conv Qlambda}
	\|(Q_n g)(t)&-(Q_m g)(t)\|_H\\&\leq \int_0^t\|\A_n(t)e^{-(t-s)\A_n(t)}(\A_n(t)-\A_n(s))(A^{-1}_n(s)-A^{-1}_m(s))g(s)\|_H ds
	\\&\quad + \int_0^t\|\A_n(t)e^{-(t-s)\A_n(t)}(\A_n(t)-\A_m(t)-\A_n(s)+\A_m(s))A^{-1}_m(s)g(s)\|_H ds
	\\&\quad + \int_0^t\|\big(\A_n(t)e^{-(t-s)\A_n(t)}-\A_m(t)e^{-(t-s)A(t)}\big)\big(\A_m(t)-\A_m(s)\big)A^{-1}_m(s)g(s)\|_H ds
	\\&=I_{n,m,1}(t)+I_{n,m,2}(t)+I_{n,m,3}(t)
	\end{align*}
	Replacing $\A_m(s)$ by $\A_m(s)+\mu$ and according to Proposition \ref{lemma: estimations for general from} we may assume $\|\A^{-1}_n(s)\|_{\L(V_\gamma',V)}\leq \const$ and $\|\A^{-1}_n(s)\|_{\L(H,V)}\leq \const.$ Next, by the estimates (\ref{Eq2: estimation  semigroup}) and (\ref{analytic estimation}) in Proposition \ref{lemma: estimations for general from}  together with $\bf (H_1)-\bf(H_2)$, we obtain
	\begin{align*}
	I_{n,m,1}(t)=&\int_0^t\|\A_n(t)e^{-\frac{t-s}{2}\A_n(t)}e^{-\frac{t-s}{2}\A_n(t)}
	(\A_n(t)-\A_n(s))(\A^{-1}_n(s)-\A^{-1}_m(s))g(s)\|_H ds
	\\&\leq \const\int_{0}^t \frac{\omega_n(t-s)}{(t-s)^{1+\gamma/2}}\|(\A^{-1}_n(s)-\A^{-1}_m(s))g(s)\|_Vds
	\\&=\const\int_0^t\frac{\omega_n(t-s)}{(t-s)^{1+\gamma/2}}\|(\A^{-1}_n(s)(\A_n(s)-\A_m(s))\A^{-1}(s))g(s)\|_Vds
	\\&\leq \const d_{n,m}\int_0^t \frac{\omega_n(t-s)}{(t-s)^{1+\gamma/2}}\|g(s)\|_{H}ds
	\\&=\const d_{n,m} (h_{n}\ast\|g(\cdot)\|_{H})(t),
	\end{align*}
	where $h_{n}(t):=\omega_n(t)t^{-1-\gamma/2}$ for $t\in[0,T]$ and $h_{n}(t):=0$ for $t\in (-\infty,0[\cap]T,+\infty).$  The assumption $\bf (H_3)$ implies  that  $h_n\in L^1(\R)$ and that $(\| h_n\|_{L^1(\R)})_{n\in\N}$ is bounded. Therefore, we obtain
	\begin{equation}\label{estimate proof}
	\int_0^T I_{n,m,1}^2(s){\rm d}s\leq \const d_{n,m}^2\int_0^T\|g(s)\|_{H}^2ds.
	\end{equation}

	\par\noindent Again by estimates (\ref{Eq2: estimation  semigroup}) and (\ref{analytic estimation}) in Proposition \ref{lemma: estimations for general from}, we obtain for the second term $I_{n,m,2}$
	\begin{align*}
	&I_{n,m,2}(t):=\int_0^t\|\A_n(t)e^{-(t-s)\A_n(t)}(\A_n(t)-\A_m(t)-\A_n(s)+\A_m(s))A^{-1}(s)g(s)\|_H ds
	\\&\leq \const \int_{0}^t \|(\A_n(t)-\A_m(t)-\A_n(s)+\A_m(s))\|_{\L(V_\gamma',H)}
	\frac{\|g(s)\|_{H}}{(t-s)^{1+\gamma/2}}{\rm  d}s
	\\&\leq\const\int_{0}^t \kappa_{n,m}(t-s)\|g(s)\|_{H}{\rm  d}s
	\end{align*}
	where
	\begin{equation*}\label{Kappa}\kappa_{n,m}
	(t):=\left\{%
	\begin{array}{ll}
	[\omega_m(t)+\omega_n(t)]t^{-(1+\frac{\gamma}{2})} & \hbox{ if } \  0\leq  t<\frac{1}{n},\\
	4d_{n,m}t^{-(1+\frac{\gamma}{2})}& \hbox{ if } \  \frac{1}{n}< t\leq T, \\
	0& \hbox{ if } \   t\in ]-\infty,0]\cap]T,+\infty[. \\
	\end{array}%
	\right. \end{equation*}
 Thanks to $\bf (H_3)$ and $\bf (H_5)$,
	$t\mapsto \kappa_{n,m}(t)$ belongs to $L^1(\R)$, and by a simple calculation we obtain
	
	\begin{equation}\label{estimation kappa}\|\kappa_{n,m}\|_{L^1(\R)}\leq \const\left[ m^{\gamma/2}d_m+ n^{\gamma/2}d_n+\int_{0}^{1/n}\frac{\omega_n(r)}{r^{{1+\gamma/2}}}  {\rm d}r+\int_{0}^{1/m}\frac{\omega_m(r)}{r^{{1+\gamma/2}}}  {\rm d}r\right].\end{equation}
Therefore,
	\begin{equation}\label{I2}
	\int_0^T I_{n,,m,2}^2(s){\rm d}s\leq  {\bf c}\left[  m^{\gamma/2}d_m+ n^{\gamma/2}d_n+\int_{0}^{1/n}\frac{\omega_n(r)}{r^{{1+\gamma/2}}}  {\rm d}r+\int_{0}^{1/m}\frac{\omega_m(r)}{r^{{1+\gamma/2}}}  {\rm d}r\right]^2\int_0^T\|g(s)\|_{H}^2ds.
	\end{equation}
	\par\noindent For the last term $I_{n,m,3}(t),$ we set $\tilde{g}_m(t,\cdot):=(\A_m(t)-\A_m(\cdot))A_m^{-1} (\cdot)g(\cdot).$ Again by assumptions $\bf (H_1)$ and  (\ref{Eq4: estimation resolvent}) and (\ref{Eq6: estimation resolvent}) from Proposition \ref{lemma: estimations for general from} and we obtain
	\begin{align*}
	&I_{n,m,3}(t):=\int_0^t\|(\A_n(t)e^{-(t-s)\A_n(t)}-\A_m(t)e^{-(t-s)A_\Gamma(t)})\tilde{g}_m(t,s)\|_H ds
	\\&\leq\frac{1}{2\pi}\int_{0}^t\int_{\Gamma}\mid\lambda\mid e^{-(t-s)\Re\lambda}
	\|(\lambda-\A_n(t))^{-1}(\A_n(t)-\A_m(t))(\lambda-\A_m(t))^{-1}\tilde{g}_m(t,s)\|_{H}{\rm  d}\lambda{\rm  d}s
	\\&\leq \frac{1}{2\pi}\int_{0}^t\int_{\Gamma}\mid\lambda\mid e^{-(t-s)\Re\lambda}
	\|(\lambda-\A_n(t))^{-1}\|_{\L(V',H)}\|(\A_n(t)-\A_m(t))(\lambda-\A_m(t))^{-1}\tilde{g}_m(t,s)\|_{V'}{\rm  d}\lambda{\rm  d}s
	\\&	\leq \frac{1}{2\pi}\int_{0}^t\int_{\Gamma}\mid\lambda\mid e^{-(t-s)\Re\lambda}
	\|(\lambda-\A_n(t))^{-1}\|_{\L(V',H)}\|(\A_n(t)-\A_m(t))(\lambda-\A_m(t))^{-1}\tilde{g}_m(t,s)\|_{V'_\gamma}{\rm  d}\lambda{\rm  d}s
	\\&\leq \const\int_{0}^t\int_{\Gamma}\mid\lambda\mid e^{-(t-s)\Re\lambda}
	\frac{d_{n,m}}{(1+|\lambda|)^{1/2}}\|(\lambda-\A_m(t))^{-1}\|_{\L(V_\gamma',V)}\|\tilde{g}_m(t,s)\|_{V_\gamma'}{\rm  d}\lambda{\rm  d}s
	\\&\leq \const d_{n,m}\int_{0}^t\int_{\Gamma}
	\frac{\mid\lambda\mid e^{-(t-s)\Re\lambda}}{(1+|\lambda|)^{1-\frac{\gamma}{2}}}\|\tilde{g}_m(t,s)\|_{V_\gamma'}{\rm  d}\lambda{\rm  d}s
	\\&\leq \const d_{n,m}\int_{0}^t\int_{0}^\infty
	r^{\frac{\gamma}{2}} e^{-(t-s)r\cos(\nu)}\|\tilde{g}_m(t,s)\|_{V_\gamma'}{\rm  d}r{\rm  d}s
	\end{align*}
 Next, since
	\[\|\tilde{g}_m(t,s)\|_{V_\gamma'}\leq \omega_m(|t-s|)\|A^{-1}_m(t)\|_{\L(H,V)}\|g(s)\|_H,\] 
it follows
	
	\begin{align*}
	I_{n,m,3}(t)&\leq \const d_{n,m}\int_{0}^\infty\frac{e^{-\rho\cos(\nu)}}{\rho^{-\gamma/2}}{\rm  d}\rho \int_{0}^t\frac{\omega_n(t-s)}{(t-s)^{1+\frac{\gamma}{2}}}\|g(s)\|_{H}{\rm  d}s
	\\&=\const d_{n,m}(h\ast\|g(\cdot)\|_{H})(t)\int_{0}^\infty\frac{e^{-\rho\cos(\nu)}}{\rho^{-\gamma/2}}{\rm  d}\rho 
	\end{align*}
	Using the same argument as that used above for (\ref{estimate proof}) one obtain 
	\begin{equation}
	\int_0^T I_{n,m,3}^2(s){\rm d}s\leq \const d_{n,m}^2\int_0^T\|g(s)\|_{H}^2ds.
	\end{equation}
This together with \ref{I2} and (\ref{estimate proof}) give the desired estimate (\ref{estimation of the error of the convergence for Q}).

	\par\noindent $(c)$ Using  Lemma \ref{Lemma: The uniform  Boundedness in $L^2(0,T;H)$} we conclude from $a)-b)$ that
	\begin{align*}
	\nonumber &\|\A_n u_n-\A_m u_m\|_{L^2(0,T;H)}
	\\&\leq \|(I-Q_n)^{-1}(\A_n u_{n,1}-\A u_{m,1})\|_{L^2(0,T;H)}
	+\|(I-Q_n)^{-1}[\A_n u_{n,2}-\A u_{m,2}]\|_{L^2(0,T;H)}
	\\\nonumber&\qquad+\|(I-Q_n)^{-1}(Q_m-Q_n)(I-Q_m)^{-1}(\A u_{m,1}+\A u_{m,2})\|_{L^2(0,T;H)}
	\\&\label{estimation final 1}\leq \const\left[d_{n,m}+n^{\gamma/2}d_n+m^{\gamma/2}d_m+  \int_{0}^{1/n}\frac{\omega_n(r)}{r^{{1+\gamma/2}}}  {\rm d}r+\int_{0}^{1/m}\frac{\omega_m(r)}{r^{{1+\gamma/2}}}  {\rm d}r\right]\Big[\|u_0\|_{V}+\|f\|_{L^2(0,T;H)}\Big].
	\end{align*}
Finally, since $(u_n)_{n\in\N}$ satisfies (\ref{Approximation Abstract Cauchy problem}),  we conclude that 

	\begin{equation}\label{estimation final 2}
	\|\dot u_{k+m}-\dot u_m\|_{L^2(0,T;H)} \leq \const_{k,m}\Big[\|u_0\|_{V}+\|f\|_{L^2(0,T;H)}\Big]
	\end{equation} 
with
	
\[\const_{n,m}:=\const\left[d_{k+m}+d_m+(k+m)^{\gamma/2}d_{k+m}+m^{\gamma/2}d_m+  \int_{0}^{\frac{1}{k+m}}\frac{\omega_{k+m}(r)}{r^{{1+\gamma/2}}}  {\rm d}r+\int_{0}^{\frac{1}{m}}\frac{\omega_m(r)}{r^{{1+\gamma/2}}}  {\rm d}r\right].
\]
Thus $(\dot u_n)$ is a Cauchy sequence. The limits coincides with $u$ according to Corollary (\ref{proposition: uniform approx V'}). This completes the proof of (\ref{estimation of the error of the convergence2}) by taking $k\lra\infty$ in (\ref{estimation final 2}).

\end{proof}
%
\section{Uniform approximation on $C(0,T;V)$}

Let $\fra, \fra_n:[0,T]\times V\times V\longrightarrow \C$ are as in Section \ref{Uniform Convergence}. Additionally, we assume that $(u_n)_{n}\subset C([0,T],V).$ Then we show in this section that $(u_n)_{n}$ converges in $C([0,T],V)$ uniformly on $(f,u_0).$ 

\begin{proposition}\label{continuity of the solution} With the notations of Section \ref{Uniform Convergence} the following estimate holds
	\begin{equation}\label{estimation of the error of the convergence3}
	\|u_{m+k}-u_m\|_{C(0,T;V)}\leq \const_{k,m}\Big[\|u_0\|_V+\|f\|_{L^2(0,T;H)}\Big]
	\end{equation}
	for all $k,m\in \N.$
\end{proposition}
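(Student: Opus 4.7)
The plan is to mimic the proof of Theorem \ref{convergence uniform} but estimate pointwise in $V$ instead of in $L^2(0,T;H)$, closing the resulting Volterra integral inequality by a generalized Gronwall lemma for $L^1$-kernels. Applying the Acquistapace--Terreni representation (\ref{Formula for the solution}) to both $u_{m+k}$ and $u_m$ and subtracting, one obtains
\[
u_{m+k}(t)-u_m(t)=D_1(t)+D_2(t)+T_2(t)+T_3(t)+\int_0^t e^{-(t-s)A_{m+k}(t)}(\A_{m+k}(t)-\A_{m+k}(s))\bigl(u_{m+k}(s)-u_m(s)\bigr)\,ds,
\]
where $D_1(t):=[e^{-tA_{m+k}(t)}-e^{-tA_m(t)}]u_0$, $D_2(t):=\int_0^t[e^{-(t-s)A_{m+k}(t)}-e^{-(t-s)A_m(t)}]f(s)\,ds$, and $T_2(t),T_3(t)$ come from the telescoping of the commutator integrals exactly as in step~(b) of the proof of Theorem \ref{convergence uniform}.

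First I would bound $D_1$ and $D_2$ pointwise in $V$ via the Duhamel identity
\[
e^{-\tau A_{m+k}(t)}-e^{-\tau A_m(t)}=\int_0^\tau e^{-(\tau-r)A_{m+k}(t)}(\A_m(t)-\A_{m+k}(t))e^{-rA_m(t)}\,dr
\]
combined with $\mathbf{(H_1)}$ and items (3), (7), (10) of Proposition \ref{lemma: estimations for general from}; the resulting Beta-function integral is finite because $\gamma<1$ and gives $\|D_1(t)\|_V\leq c\,d_{m+k,m}\|u_0\|_V$ and $\|D_2(t)\|_V\leq c\,d_{m+k,m}\|f\|_{L^2(0,T;H)}$. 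For $T_2$ I would split at $t-s=1/(m+k)$ and use the two competing estimates $\|(\A_{m+k}(t)-\A_{m+k}(s))-(\A_m(t)-\A_m(s))\|_{\L(V,V_\gamma')}\leq 2d_{m+k,m}$ and $\leq\omega_{m+k}(t-s)+\omega_m(t-s)$, obtaining the same kernel $\kappa_{m+k,m}$ of (\ref{estimation kappa}); for $T_3$ I apply the Duhamel identity to the semigroup difference together with $\mathbf{(H_2)}$. Pairing these kernels via Cauchy--Schwarz with the a priori bound $\|u_m\|_{L^2(0,T;V)}\leq c[\|u_0\|_V+\|f\|_{L^2(0,T;H)}]$ from Theorem \ref{wellposedness in V'} gives $\sup_{t\in[0,T]}(\|T_2(t)\|_V+\|T_3(t)\|_V)\leq\const_{k,m}[\|u_0\|_V+\|f\|_{L^2(0,T;H)}]$.

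Setting $\phi(t):=\|u_{m+k}(t)-u_m(t)\|_V$, the remaining self-referential integral contributes a convolution with kernel $K(r):=c\,\omega_{m+k}(r)/r^{(1+\gamma)/2}$, which by $\mathbf{(H_3)}$ satisfies $K(r)\leq c/r^{1/2}$ and hence belongs to $L^1(0,T)$. Since $\phi\in C([0,T])$ by the standing assumption $u_n\in C([0,T];V)$, the resulting Volterra inequality
\[
\phi(t)\leq\const_{k,m}[\|u_0\|_V+\|f\|_{L^2(0,T;H)}]+\int_0^t K(t-s)\phi(s)\,ds,\qquad t\in[0,T],
\]
is closed by the generalized Gronwall lemma for $L^1$-kernels, yielding (\ref{estimation of the error of the convergence3}). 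The main obstacle I foresee is the $T_2$ term: whereas the $L^2$-estimate in Theorem \ref{convergence uniform} applied Young's convolution inequality $L^1\ast L^2\to L^2$ to the kernel $\kappa_{m+k,m}$, a pointwise-in-$t$ estimate forces us to pair the two pieces of $\kappa_{m+k,m}$ with different norms of $\|u_m(\cdot)\|_V$; the large-$r$ piece is paired with the $L^2$-norm via Cauchy--Schwarz (controlled by $\mathbf{(H_5)}$, producing the $(m+k)^{\gamma/2}d_{m+k}+m^{\gamma/2}d_m$ contribution to $\const_{k,m}$), while the small-$r$ piece is absorbed by its $L^1$-mass (controlled by $\mathbf{(H_6)}$, producing the $\int_0^{1/n}\omega_n(r)/r^{1+\gamma/2}\,dr$ contributions). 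This is precisely the form of $\const_{k,m}$ displayed after (\ref{estimation final 2}).
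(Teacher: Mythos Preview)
Your overall framework---subtracting the two Acquistapace--Terreni representations and closing the resulting Volterra inequality by Gronwall instead of inverting $I-P_n$ on $C(0,T;V)$ as the paper does---is a legitimate alternative, and your treatment of $D_1$, $D_2$, $T_3$ and the self-referential kernel $K$ is fine. The paper instead writes $u_n=(I-P_n)^{-1}(u_{n,1}+u_{n,2})$ with $(P_nh)(t)=\int_0^t e^{-(t-s)A_n(t)}(\A_n(t)-\A_n(s))h(s)\,ds$, shows $\|P_n\|_{\L(C(0,T;V))}\le 1/2$ (after a $\mu$-shift, as in Lemma~\ref{Lemma: Invertibility of I-QLambda}), and uses the resolvent identity for $(I-P_n)^{-1}-(I-P_m)^{-1}$; this is the Neumann-series version of your Gronwall iteration.

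There is, however, a genuine gap in your $T_2$ estimate. The large-$r$ piece of $\kappa_{m+k,m}$ is $4d_{m+k,m}\,r^{-(1+\gamma/2)}$ on $r>1/n$ (with $n=m+k$), and pairing it with $\|u_m\|_{L^2(0,T;V)}$ via Cauchy--Schwarz costs its $L^2$-norm, which is of order $d_{m+k,m}\,n^{(1+\gamma)/2}$, \emph{not} $n^{\gamma/2}d_n$ as you claim. Under $\mathbf{(H_5)}$ the quantity $n^{(1+\gamma)/2}d_n=(n^{\gamma/2}d_n)\cdot n^{1/2}$ need not vanish (it can even blow up), so your constant does not match the $\const_{k,m}$ of the statement. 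Likewise, ``absorbing the small-$r$ piece by its $L^1$-mass'' tacitly pairs with $\|u_m\|_{C(0,T;V)}$, which you have not shown to be uniformly bounded in $m$. The fix is to first establish $\|u_m\|_{C(0,T;V)}\le c[\|u_0\|_V+\|f\|_{L^2(0,T;H)}]$---either by a single-index Gronwall applied to (\ref{Formula for the solution}) with your kernel $K$, or, as the paper does, via the invertibility of $I-P_m$ on $C(0,T;V)$---and then pair \emph{all} of $\kappa_{m+k,m}$ with this sup-norm via its $L^1$-mass; estimate (\ref{estimation kappa}) then delivers exactly $\const_{k,m}$.
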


In view of Theorem (\ref{proposition: uniform approx V'}), the following is then true and follows immediately from Proposition (\ref{continuity of the solution}).
\begin{corollary}\label{corollary continuity of solution} Let $(f,u_0)\in L^2(0,T;H)\times V$ and let $u\in MR_2(V,H)$ be the solution of (\ref{Abstract Cauchy problem 0}). Then $u\in C(0,T;V)$ and 
		\begin{equation}\label{estimation of the error of the convergence4}
		\|u_{n}-u\|_{C(0,T;V)}\leq \textbf{c}\left[d_{n}+n^{\gamma/2}d_n\right]\Big[\|u_0\|_V+\|f\|_{L^2(0,T;H)}\Big]
		\end{equation}
holds.
\end{corollary}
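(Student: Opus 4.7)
The plan is to deduce Corollary \ref{corollary continuity of solution} from Proposition \ref{continuity of the solution} by a passage to the limit, using Theorem \ref{proposition: uniform approx V'} to identify the limit as $u$.

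First I would observe that the two-index bound (\ref{estimation of the error of the convergence3}), combined with the fact that $\const_{k,m}$ tends to zero as $\min(m,m+k)\to\infty$ (guaranteed by ${\bf (H_5)}$ and ${\bf (H_6)}$), shows that $(u_n)_{n\in\N}$ is a Cauchy sequence in the Banach space $C([0,T],V)$. Hence it converges to some $\tilde u \in C([0,T],V)$.

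Next I would identify $\tilde u$ with $u$. Assumption ${\bf (H_1)}$ and the continuous embedding $V\hookrightarrow V_\gamma$ imply that ${\bf (H_0)}$ holds with $\const\,d_n$ in place of $d_n$, so Theorem \ref{proposition: uniform approx V'} applies and yields $u_n\to u$ in $C([0,T],H)$. Since convergence in $C([0,T],V)$ entails convergence in $C([0,T],H)$ (the embedding $V\hookrightarrow H$ being continuous), uniqueness of limits gives $\tilde u = u$, proving the first claim $u\in C(0,T;V)$.

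For the quantitative bound (\ref{estimation of the error of the convergence4}), I would fix $n\in\N$, take $m=n$ in (\ref{estimation of the error of the convergence3}), and let $k\to\infty$. The left-hand side converges to $\|u-u_n\|_{C(0,T;V)}$ by the $C([0,T],V)$-convergence just established. On the right, the contributions to $\const_{k,n}$ depending on $k+n$, namely $d_{k+n}$, $(k+n)^{\gamma/2}d_{k+n}$, and $\int_0^{1/(k+n)}\omega_{k+n}(r)/r^{1+\gamma/2}\,{\rm d}r$, all vanish as $k\to\infty$ by ${\bf (H_5)}$ and ${\bf (H_6)}$, leaving only the $n$-indexed pieces $d_n$, $n^{\gamma/2}d_n$, and $\int_0^{1/n}\omega_n(r)/r^{1+\gamma/2}\,{\rm d}r$.

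The only non-routine point I expect concerns reconciling the surviving bound with the precise form stated in (\ref{estimation of the error of the convergence4}): the residual integral $\int_0^{1/n}\omega_n(r)/r^{1+\gamma/2}\,{\rm d}r$ is not displayed on the right-hand side of the corollary, so either it should be incorporated into the constant $\textbf{c}$ as an additional additive term or be absorbed into $n^{\gamma/2}d_n$ under the stated hypotheses. Verifying this absorption is the sole delicate step; the remainder of the proof is a short limiting argument in the spirit of extracting a single-index convergence estimate from a Cauchy-type control.
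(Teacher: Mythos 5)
Your argument is exactly the paper's own (very terse) proof: the paper deduces Corollary \ref{corollary continuity of solution} precisely by observing that the two-index estimate (\ref{estimation of the error of the convergence3}) of Proposition \ref{continuity of the solution} makes $(u_n)_{n\in\N}$ Cauchy in $C([0,T],V)$, that Theorem \ref{proposition: uniform approx V'} identifies the limit with $u$ (your reduction of ${\bf (H_1)}$ to ${\bf (H_0)}$ via the embedding $V_\gamma'\supset H$, i.e.\ $\Vert v\Vert_{V_\gamma}\le \const \Vert v\Vert_V$, is the right justification), and that setting $m=n$ and letting $k\to\infty$ converts the Cauchy control into the one-index bound. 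So the proposal is correct and follows the same route.

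Concerning the one point you leave open: the residual integral cannot be absorbed into $n^{\gamma/2}d_n$ under ${\bf (H_5)}$--${\bf (H_6)}$ alone. For instance, take $\omega_n(t)=t^{\eta}$ with $\gamma/2<\eta<1$ and $d_n=1/n$: then $\int_0^{1/n}\omega_n(r)r^{-1-\gamma/2}\,{\rm d}r$ is of order $n^{-(\eta-\gamma/2)}$, while $n^{\gamma/2}d_n=n^{-(1-\gamma/2)}$ decays strictly faster, so no constant $\textbf{c}$ works. What the limiting argument actually yields is $\|u_n-u\|_{C(0,T;V)}\le \const\big[d_n+n^{\gamma/2}d_n+\int_0^{1/n}\omega_n(r)r^{-1-\gamma/2}\,{\rm d}r\big]\big[\|u_0\|_V+\|f\|_{L^2(0,T;H)}\big]$, and the absence of the integral term in (\ref{estimation of the error of the convergence4}) is evidently a misprint in the paper: the parallel estimates in Theorem \ref{convergence uniform} (see (\ref{estimation of the error of the convergence2})) and in Corollary \ref{corollary: estimation uniform approx affine} both retain it. With the statement corrected in this way, your proof is complete; with the statement taken literally, no proof along these lines (including the paper's) establishes it.
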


\begin{proof} We will proceed similarly as in the proof of Theorem
	\ref{convergence uniform}. Let $m,k\in\N$ and set $n:=m+k$ and $d_{n,m}:=d_n+d_m.$
	\par\noindent \textit{Step a:}  By using  (\ref{Eq2: estimation resolvent}) and (\ref{Eq6: estimation resolvent}) in Proposition \ref{lemma: estimations for general from} for $(\lambda-\A_n(t))^{-1}$ and $(\lambda-\A_m(t))^{-1},$ respectively, and $\bf (H_1)$ we obtain  for every $t\in[0,T]$ that
	\begin{align*}
	\|u_{1,n}(t)&-u_{1,m}(t)\|_V\leq \frac{1}{2\pi}\int_\Gamma e^{-t\Re \lambda}\|(\lambda-\A_n(t))^{-1}(\A_n(t)-\A_m (t))(\lambda-\A_m(t))^{-1}u_0\|_V d\lambda
	\\&\leq\const d_{n,m}\int_\Gamma \frac{e^{-t\Re \lambda}}{(1+|\lambda|)^\frac{3-\gamma}{2}}d\lambda \|u_0\|_V
	\\&\leq \const d_{n,m}\Big(\int_0^\infty \frac{1}{(1+r)^\frac{3-\gamma}{2}}d r\Big) \|u_0\|_V.
 	\end{align*}
	
	\par\noindent \textit{Step b:} Again  the estimates  (\ref{Eq4: estimation resolvent}) and  (\ref{Eq6: estimation resolvent}) in Proposition \ref{lemma: estimations for general from} and formula $\bf (H_1)$ imply that
	\[\|(\lambda-\A_n(t))^{-1}(\A_n(t)-\A_m(t))(\lambda-\A_m(t))^{-1}f(s)\|_V\leq  \frac{\const d_{n,m}}{(1+|\lambda|)^{1-\frac{\gamma}{2}}}\|f(s)\|_H.\]
	Therefore, we obtain by using  Fubini's theorem that
	for all $\lambda\in \Gamma\setminus\{0\}$
	\begin{align*}
	\|u_{2,n}(t)&-u_{2,m}(t)\|_V=\|\int_0^t\frac{1}{2i\pi}\int_\Gamma e^{-(t-s) \lambda}(\lambda-\A_n(t))^{-1}(\A_n(t)-\A_m(t))(\lambda-\A_m(t))^{-1}f(s) d\lambda ds\|_V
	\\&\leq \const d_{n,m}\int_0^\infty \frac{1}{(1+r)^{1-\frac{\gamma}{2}}}\Big(\int_0^t e^{-(t-s)r\cos(\nu)}\|f(s)\|_H ds\Big) dr
	\\&\leq \const d_{n,m}\|f\|_{L^2(0,T;H)}\int_0^\infty \frac{1}{(1+r)^{1-\frac{\gamma}{2}}}\Big(\int_0^t e^{-2(t-s)r\cos(\nu)}ds\Big)^{1/2}  dr
	\\&\leq \const\frac{d_{n,m}}{\sqrt{2\cos(\nu)}}\Big(\int_0^\infty \frac{1}{\sqrt{r}(1+r)^{1-\frac{\gamma}{2}}}dr\Big)\|f\|_{L^2(0,T;H)}\leq \const d_{n,m}\|f\|_{L^2(0,T;H)}.
	\end{align*}
	
	\par\noindent\textit{Step c:} For each $h\in C(0,T;V)$ we set
	\[(P_n h)(t):=\int_{0}^te^{-(t-s)A_n(t)}(\A_n(t)-\A_n(s))h(s){\rm  d}s.\]
	From \cite[Lemma 4.5]{Ar-Mo15} we have $(P_n h)_{n\in\N}\subset C(0,T;V).$ Thanks to Proposition  \ref{Prop: Dini condition for Linear-approximation}
	and assumptions ${\bf (H_2)}$-${\bf (H_3)}$ one can prove in a similar way as in Step 3 of the proof of \cite[Theorem 4.4]{Ar-Mo15} (see also Step 3 of the proof of Lemma \ref{Lemma: Invertibility of I-QLambda}) that $\|P_n\|_{\L(C(0,T;V)}\leq 1/2$ and thus $I-P_n$ is invertible on $\L(C(0,T,V)).$ Therefore, we obtain by using  the representation formula (\ref{Formula for the solution})
	\begin{align} \label{Eq 1:Theorem: proof continuity}
	u_{n}-u_{m}=(I-P_n)^{-1}&(u_{n,1}-u_{m,1})+(I-P_m)^{-1}(u_{n,2}-u_{m,2})\\&\qquad +
	(I-P_n)^{-1}(P_n-P_m)(I-P_m)^{-1}(u_{m,1}+u_{m,2})
	\end{align}
	\par\noindent The term on the right hand side of (\ref{Eq 1:Theorem: proof continuity}) is treated in \textit{Step} a)-b). We need only to  estimate the difference $P_n-P_m$ on $\L(C(0,T;V)).$  For each $h\in C(0,T;V)$ and $t\in [0,T]$ we have
	\begin{align*}
	(&P_n h -P_m h)(t)\\&=\int_{0}^te^{-(t-s)A_n(t)}\big[\A_n(t)-\A_n(s)-\A_m(t)+\A_m(s)\big]h(s){\rm  d}s
	\\& \qquad\qquad +\int_{0}^t\big[e^{-(t-s)A_n(t)}-e^{-(t-s)A_m(t)}\big](\A_m(t)-\A_m(s))h(s){\rm  d}s.
	\\&=\int_{0}^te^{-(t-s)A_n(t)}\big[\A_n(t)-\A_n(s)-\A_m(t)+\A_m(s)\big]h(s){\rm  d}s
	\\&+\int_{0}^t\frac{1}{2i\pi}\int_\Gamma e^{-(t-s)\lambda}(\lambda-\A_n)^{-1}(A_n(t)-A_m(t))(\lambda-\A_m)^{-1}(\A_m(t)-\A_m(s))h(s){\rm  d}s.
	\end{align*}
	From $\bf (H_1)$-$\bf (H_2)$ and the estimate  (\ref{Eq6: estimation resolvent}) in Proposition \ref{lemma: estimations for general from} we have 
	\begin{align*}
	\|(\lambda-\A_n)^{-1}(A_n(t)-&A_m(t))(\lambda-\A_m)^{-1}(\A_m(t)-\A_m(s))h(s)\|_V
	\\&\leq \const d_{n,m}\frac{\omega_m(t-s)}{(1+|\lambda|)^{1-\gamma}}\|h(s)\|_V.
	\end{align*}
	Thus using $\bf (H_3)$, it follows 
	\begin{align*}
	\|\int_{0}^t&\frac{1}{2i\pi}\int_\Gamma \big[e^{-(t-s)\lambda}(\lambda-\A_n)^{-1}(A_n(t)-A_m(t))(\lambda-\A_m)^{-1}\big](\A_m(t)-\A_m(s))h(s){\rm  d}s \|_V
	\\&\leq \const d_{n,m}\int_{0}^t\frac{1}{\pi}\int_0^\infty e^{-(t-s)r\cos(\theta)}\frac{\omega_m(t-s)}{r^{1-\gamma}}\|h(s)\|_V dr ds
	\\&= \const d_{n,m}\int_{0}^t\frac{1}{\pi}\int_0^\infty e^{-\rho\cos(\theta)}\frac{\omega_m(t-s)}{\rho^{1-\gamma}}(t-s)^{1-\gamma}\|h(s)\|_V \frac{d\rho}{t-s} ds
	\\&\leq \const d_{n,m}\int_{0}^t\Big(\int_0^\infty \frac{e^{-\rho\cos(\nu)}}{\rho^{1-\gamma}}d\rho\Big)\frac{\omega_m(t-s)}{(t-s)^{\gamma}}\|h(s)\|_V d\rho ds
	\\& \leq \const d_{n,m} \|h\|_{C(0,T;V)}.
	\end{align*}
	Next, writing
	\begin{align*}e^{-(t-s)A_n(t)}&\big[\A_n(t)-\A_n(s)-\A_m(t)+\A_m(s)\big]h(s)
	\\&=A^{-1/2}_n(t)A^{1/2}_n(t)e^{-(t-s)A_n(t)}\big[\A_n(t)-\A_n(s)-\A_m(t)+\A_m(s)\big]
	\end{align*}
	then from (\ref{Eq3: estimation  semigroup}) and (\ref{Eq4: estimation  semigroup}) in Proposition \ref{lemma: estimations for general from} and the fact that $e^{-\cdot A_n(t)}$ is an analytic $C_0$-semigroup on $V$ we obtain 
	\begin{align*}
	&\int_{0}^t\|e^{-(t-s)A_n(t)}\big[\A_n(t)-\A_n(s)-\A_m(t)+\A_m(s)\big]h(s)\|_V{\rm  d}s
	\\ &\leq  \const \int_{0}^t \kappa_{n,m}(s)\|h(t-s)\|_V{\rm  d}s
	\end{align*}
	where $\kappa_{n,m}$ is defined in the proof of Theorem \ref{convergence uniform}. Therefore, using (\ref
	{estimation kappa}) we conclude 
	\begin{align*}\|u_{m+k}(t)-u_{m}(t)\|_V&\leq \const_{n,m}\Big[\|u_0\|_V+\|f\|_{L^2(0,T;H)}\big]
	\end{align*}
This complete the proof of the proposition.
	
\end{proof}

\section{Example: an affine approximation}
The aim of this section is to provide an explicit approximation of $\fra$ that satisfies the required hypothesis $\bf (H_1)-\bf(H_6).$ Recall that  $V,H$ denote two separable complex Hilbert spaces and $\fra:[0,T]\times V\times V\to \C$ is a non-autonomous closed form satisfying  (\ref{eq:continuity-nonaut-introduction}). Assume moreover that there exists $0\leq \gamma< 1$ and a non-decreasing continuous function $\omega:[0,T]\longrightarrow [0,+\infty)$ with
\begin{equation}\label{eq 1:Dini-condition} \sup_{t\in[0,T]} \frac{\omega(t)}{t^{\gamma/2}}<\infty, \end{equation}
\begin{equation}\label{eq 2:Dini-condition}
\int_0^T\frac{\omega(t)}{t^{1+\gamma/2}} {\rm d}t<\infty
\end{equation}
and
\begin{equation}\label{eq 3:Dini-condition}
|\fra(t,u,v)-\fra(s,u,v)| \le\omega(|t-s|) \Vert u\Vert_{V} \Vert v\Vert_{V_\gamma},  \quad t,s\in [0,T], u,v\in V.
\end{equation}
\begin{remark} We note that, the main example of a continuity modulus $\omega$ introduced above is the function $\omega(t)=t^{\eta}$ where $\eta>\gamma/2.$ This main example moreover satisfies 
\begin{equation}\label{model contiuity additional}
\lim\limits_{t\to 0}\frac{\omega(t)}{t^{\gamma/2}}=0.
\end{equation}
For general case, thanks  to  (\ref{eq 2:Dini-condition}), one can always find a null sequence $(t_n)_{n\in\N}\subset \R_+$ with $\lim\limits_{n\to \infty}{\omega(t_n)}{t_n^{-\gamma/2}}=0.$
This is true because $\displaystyle\liminf_{t\to 0} \omega(t)t^{-\gamma/2}=0,$ since otherwise we would have $\displaystyle\int_0^T \frac{\omega(s)}{s^{1+\gamma/2}}{
	\rm d}s=\infty$ 
which contradict (\ref{eq 2:Dini-condition}). 
\end{remark}
\medskip

\par\noindent Now let $\Lambda=(0=\lambda_0<\lambda_1<...<\lambda_{n+1}=T)$ be
a uniform subdivision of $[0,T],$ i.e., \[|\Lambda|:=\sup_l|\lambda_{l+1}-\lambda_l|=|\lambda_{k+1}-\lambda_k| \  \text{ for each } \ k=0,1,...,n,\]  and consider a family of sesquilinear  forms $\fra_k:V \times V \to \C$  given by  
\begin{equation*}\label{eq:form-moyen integrale}
\fra_k(u,v):=\frac{1}{\lambda_{k+1}-\lambda_k}
\int_{\lambda_k}^{\lambda_{k+1}}\fra(r;u,v){\rm  d}r,\quad  u,v\in V \ \
\end{equation*}
for each $k=0,1,...,n.$ Remark that $\fra_k$ satisfies (\ref{eq:continuity-nonaut-introduction}) for all $k=0,1,...n.$
Then  $\fra_\Lambda:[0,T]\times V \times V \to \C$ defined for $t\in [\lambda_k,\lambda_{k+1}]$ by
\begin{equation}\label{form: approximation formula2}
\fra_{\Lambda}(t;u,v):=\frac{\lambda_{k+1}-t}{\lambda_{k+1}-\lambda_k}\fra_{k}(u,v)+\frac{t-\lambda_k}{\lambda_{k+1}-\lambda_k}\fra_{k+1}(u,v), \ \ u,v\in V,
\end{equation}
is a non-autonomous closed sesquilinear forms satisfying (\ref{eq:continuity-nonaut-introduction}) with the same constants $\alpha, \beta$ and $M.$ The associated time dependent operator is denoted by
\begin{equation}\label{Operator: approximation formula1}\A_\Lambda(.):[0,T]\to \L(V,V')
\end{equation}
and is given for $t\in [\lambda_k,\lambda_{k+1}]$ by
\begin{equation}\label{Operator: approximation formula2}\
\A_{\Lambda}(t):=\frac{\lambda_{k+1}-t}{\lambda_{k+1}-\lambda_k}\A_{k}+\frac{t-\lambda_k}{\lambda_{k+1}-\lambda_k}\A_{k+1}
\end{equation}
where
 \begin{equation*}\label{eq:op-moyen integrale}
 \A_ku :=\frac{1}{\lambda_{k+1}-\lambda_k}
 \int_{\lambda_k}^{\lambda_{k+1}}\A_m(r)u{\rm  d}r,\quad u\in V, \ k=0,1,...,n.\ \  \end{equation*}
%


%
\medskip 

\noindent In what follows we extend $\omega$ to $[0,2T]$ by setting $\omega(t)=\omega(T)$ for $T\leq t\leq 2T.$
\begin{proposition}\label{Prop: Dini condition for Linear-approximation}
	For all $u,v\in V,\ t,s\in [0,T]$ we have
	\begin{equation}\label{eq:Dini-condition for Linear-approximation}
	|\fra_\Lambda(t,u,v)-\fra_\Lambda(s,u,v)| \le\omega_\Lambda(|t-s|) \Vert u\Vert_{V} \Vert v\Vert_{V_\gamma}
	\end{equation}
	where $\omega_\Lambda:[0,T]\longrightarrow [0,+\infty[$ is defined by
	\[ \omega_\Lambda
	(t):=\left\{%
	\begin{array}{ll}
	\frac{t}{|\Lambda|}\omega(4|\Lambda|)& \hbox{for } 0\leq t\leq 2|\Lambda|,\\
	2\omega(2t) & \hbox{ for } 2|\Lambda|<t\leq T. \\
	\end{array}%
	\right. \]
	Moreover,   $\A_\Lambda(t)-\A_\Lambda(s)\in \L(V,V_\gamma'),$
	\begin{equation}\label{Eq: Dini condition operators}
	\|\A_\Lambda(t)-\A_\Lambda(s)\|_{\L(V,V_\gamma')}\leq \omega_\Lambda(|t-s|)
	\end{equation}
	and \begin{equation}\label{Eq2:Dini condition operators}
	\|\A_\Lambda(t)-\A(t)\|_{\L(V,V_\gamma')}\leq 2\omega(2|\Lambda|)
	\end{equation}
	for each  $t,s\in [0,T].$
\end{proposition}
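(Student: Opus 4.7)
The proof reduces to a single discrete building block: by the substitution $r=\lambda_l+\tau$ one has
\begin{equation*}
\fra_l(u,v)-\fra_m(u,v)=\frac{1}{|\Lambda|}\int_0^{|\Lambda|}\bigl[\fra(\lambda_l+\tau;u,v)-\fra(\lambda_m+\tau;u,v)\bigr]\,{\rm d}\tau,
\end{equation*}
so (\ref{eq 3:Dini-condition}) yields $|\fra_l(u,v)-\fra_m(u,v)|\le\omega(|l-m|\,|\Lambda|)\|u\|_V\|v\|_{V_\gamma}$ for all indices $l,m$ (with a suitable boundary convention for the index $n+1$). The estimate (\ref{Eq2:Dini condition operators}) then follows at once: for $t\in[\lambda_k,\lambda_{k+1}]$, a convex combination gives
\begin{equation*}
|\fra_\Lambda(t;u,v)-\fra(t;u,v)|\le\tfrac{\lambda_{k+1}-t}{|\Lambda|}|\fra_k-\fra(t)|+\tfrac{t-\lambda_k}{|\Lambda|}|\fra_{k+1}-\fra(t)|\le\omega(2|\Lambda|)\|u\|_V\|v\|_{V_\gamma},
\end{equation*}
using $|r-t|\le|\Lambda|$ in the integral defining $\fra_k$ and $|r-t|\le 2|\Lambda|$ in the one for $\fra_{k+1}$.

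For the main modulus estimate (\ref{eq:Dini-condition for Linear-approximation}) I would split into two regimes. If $|t-s|\le 2|\Lambda|$, then either $t,s$ lie in the same subinterval, in which case $\fra_\Lambda(t)-\fra_\Lambda(s)=\tfrac{t-s}{|\Lambda|}(\fra_{k+1}-\fra_k)$ gives a direct bound $\tfrac{|t-s|}{|\Lambda|}\omega(|\Lambda|)$; or $t,s$ lie in adjacent subintervals, and inserting the shared endpoint $\lambda_{j+1}$ (where $\fra_\Lambda$ is continuous with value $\fra_{j+1}$) reduces the situation to applying the same-interval estimate twice, summing to $\tfrac{|t-s|}{|\Lambda|}\omega(|\Lambda|)\le\tfrac{|t-s|}{|\Lambda|}\omega(4|\Lambda|)$. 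If $|t-s|>2|\Lambda|$, then $s,t$ necessarily lie in non-adjacent subintervals indexed by $j,k$ with $k\ge j+2$, and I would use the algebraic identity
\begin{equation*}
\fra_\Lambda(t)-\fra_\Lambda(s)=\theta_t(\fra_k-\fra_j)+(1-\theta_t)(\fra_{k+1}-\fra_{j+1})+(\theta_t-\theta_s)(\fra_j-\fra_{j+1})
\end{equation*}
with $\theta_t=\tfrac{\lambda_{k+1}-t}{|\Lambda|}$ and $\theta_s=\tfrac{\lambda_{j+1}-s}{|\Lambda|}$, obtained by adding and subtracting $\theta_t\fra_j+(1-\theta_t)\fra_{j+1}$. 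Combined with the building block and the comparison $(k-j)|\Lambda|\le(t-s)+|\Lambda|<\tfrac{3}{2}|t-s|\le 2|t-s|$, the first two terms contribute at most $\omega(2|t-s|)$ in total (coefficients summing to $1$), and the third at most $\omega(|\Lambda|)\le\omega(2|t-s|)$ (coefficient of modulus at most $1$), giving exactly $2\omega(2|t-s|)$.

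The operator norm bounds (\ref{Eq: Dini condition operators}) and (\ref{Eq2:Dini condition operators}) follow from the sesquilinear estimates by $V'$-$V$ duality: the inequality for $\fra_\Lambda(t)-\fra_\Lambda(s)$, resp.\ $\fra_\Lambda(t)-\fra(t)$, is established for $v\in V$, but since $V$ is dense in $V_\gamma$ it extends to $v\in V_\gamma$, showing that the differences of operators, a priori only in $\L(V,V')$, actually belong to $\L(V,V_\gamma')$ with the required norm. The main obstacle I anticipate is the large-$|t-s|$ regime: discovering the regrouping leading to the identity above, and tracking constants tightly enough (the factor $\tfrac{3}{2}<2$ and the coefficients summing to exactly $2$) to land on the stated $2\omega(2|t-s|)$ rather than a coarser multiple. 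Everything else is routine trapezoidal-quadrature bookkeeping combined with monotonicity of $\omega$.
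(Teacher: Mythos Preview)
Your proof is correct and follows essentially the same route as the paper's: the same discrete building block $|\fra_l-\fra_m|\le\omega(|l-m|\,|\Lambda|)\|u\|_V\|v\|_{V_\gamma}$, the same-interval and adjacent-interval cases handled by telescoping at grid points, and the same algebraic regrouping for the far case (your identity with $\theta_t,\theta_s$ is exactly the paper's decomposition, just in cleaner notation). One small inaccuracy: the claim that $|t-s|\le 2|\Lambda|$ forces $t,s$ to lie in the same or in \emph{adjacent} subintervals is false---take $s=\lambda_{k+1}-\varepsilon$ and $t=\lambda_{k+2}+\varepsilon$, which lie two cells apart with $|t-s|=|\Lambda|+2\varepsilon<2|\Lambda|$; but your telescoping argument extends verbatim by inserting both intermediate nodes, still yielding $\tfrac{|t-s|}{|\Lambda|}\omega(|\Lambda|)$, so nothing is lost.
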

\begin{proof} Let $u,v\in V$ and $t,s\in [0,T].$  For the proof of (\ref{eq:Dini-condition for Linear-approximation}) we distinguish three cases
	\par\noindent\textit{Case 1:} If $\lambda_k\leq s<t \leq\lambda_{k+1}$ for some fixed $k\in\{0,1,\cdots,n\}.$ Then we obtain, using (\ref{eq 3:Dini-condition}) and the fact that $\omega$ is non-decreasing, that
	\begin{align*}
	|\fra_\Lambda(t,u,v)-\fra_\Lambda(s,u,v)|
	&=\Big\vert\frac{\lambda_{k+1}-t}{\lambda_{k+1}-\lambda_k}\fra_{k}(u,v)+\frac{t-\lambda_k}{\lambda_{k+1}-\lambda_k}\fra_{k+1}(u,v)
	\\&\qquad\qquad\qquad\ \ \ \ -\frac{\lambda_{k+1}-s}{\lambda_{k+1}-\lambda_k}\fra_{k}(u,v)-\frac{s-\lambda_k}{\lambda_{k+1}-\lambda_k}\fra_{k+1}(u,v)\Big\vert
	\\&=\frac{(t-s)}{|\Lambda|}\Big\vert\fra_{k}(u,v)-\fra_{k+1}(u,v)\Big\vert
	\\&\leq \frac{(t-s)}{|\Lambda|}\frac{1}{|\Lambda|}\int_{0}^{|\Lambda|}\mid a(r+\lambda_{k},u,v)-a(r+\lambda_{k+1},u,v)\mid{\rm  d}r
	\\&\leq \frac{(t-s)}{|\Lambda|}\frac{1}{|\Lambda|}\int_{0}^{|\Lambda|}\omega(\lambda_{k+1}-\lambda_{k})\Vert u\Vert_{V} \Vert v\Vert_{V_\gamma}{\rm  d}r
	=\frac{(t-s)}{|\Lambda|}\omega(|\Lambda|)\Vert u\Vert_{V} \Vert v\Vert_{V_\gamma}
	\end{align*}
	\par\noindent \textit{Case 2:} If $\lambda_k\leq s\leq \lambda_{k+1} \leq t\leq\lambda_{k+2},$  then we deduce from \textit{Step 1} that
	\begin{align*}
	|\fra_\Lambda(t,u,v)-\fra_\Lambda(s,u,v)|&\leq
	|\fra_\Lambda(t,u,v)-\fra_\Lambda(\lambda_{k+1},u,v)|+|\fra_\Lambda(\lambda_{k+1},u,v)-\fra_\Lambda(s,u,v)|
	\\&\leq \frac{t-\lambda_{k+1}}{|\Lambda|}\omega(|\Lambda|)\Vert u\Vert_{V} \Vert v\Vert_{V_\gamma}+
	\frac{\lambda_{k+1}-s}{|\Lambda|}\omega(|\Lambda|)\Vert u\Vert_{V} \Vert v\Vert_{V_\gamma}
	\\&=\frac{t-s}{|\Lambda|}\omega(|\Lambda|)\Vert u\Vert_{V} \Vert v\Vert_{V_\gamma}.
	\end{align*}
	\textit{Case 3:}
	If now $\lambda_k\leq s\leq \lambda_{k+1}<\cdots<\lambda_{l}\leq t\leq\lambda_{l+1}.$ Then $\lambda_l-\lambda_{k+1}\leq t-s\leq \lambda_{l+1}-\lambda_{k}$ and thus
	\begin{equation}\label{Eq1 Proof Proposition: Dini-condition for Linear-approximation}|t-s +\lambda_{k+1}-\lambda_{l+1}|\leq |\Lambda|.
	\end{equation} It follows that
	\begin{align*}
	\fra_\Lambda&(t,u,v)-\fra_\Lambda(s,u,v)\\&=
	\frac{\lambda_{l+1}-t}{\lambda_{l+1}-\lambda_l}\fra_{l}(u,v)+\frac{t-\lambda_l}{\lambda_{l+1}-\lambda_l}\fra_{l+1}(u,v)
	-\frac{\lambda_{k+1}-s}{\lambda_{k+1}-\lambda_l}\fra_{k}(u,v)-\frac{s-\lambda_k}{\lambda_{k+1}-\lambda_k}\fra_{k+1}(u,v)
	\\&=\frac{\lambda_{l+1}-t}{|\Lambda|}[\fra_l(u,v)-\fra_k(u.v)]+\frac{t-\lambda_{l}}{|\Lambda|}[\fra_{l+1}(u,v)-\fra_{k+1}(u.v)]
	\\&\qquad+\frac{\lambda_{l+1}-\lambda_{k+1}+s-t}{|\Lambda|}\fra_k(u,v)+\frac{\lambda_{k}-\lambda_{l}+t-s}{|\Lambda|}\fra_{k+1}(u,v)
	\end{align*}
	Because of (\ref{Eq1 Proof Proposition: Dini-condition for Linear-approximation}) and since $\lambda_k-\lambda_l=\lambda_{k+1}-\lambda_{l+1},$ we deduce that
	\begin{align*}
	\mid\fra_\Lambda(t,u,v)-\fra_\Lambda(s,u,v)\mid&\leq \frac{\lambda_{k+1}-t}{|\Lambda|}\omega(\lambda_l-\lambda_k)+\frac{t-\lambda_{k}}{|\Lambda|}\omega(\lambda_{l+1}-\lambda_{k+1})
	\\&\qquad\qquad \ \ \ \ \ \ \ \ +\frac{\mid t-s+\lambda_{l+1}-\lambda_{k+1}\mid}{|\Lambda|}\omega(\lambda_{l+1}-\lambda_{l})
	\\&\leq\omega(\lambda_l-\lambda_k)+\omega(\lambda_{l+1}-\lambda_l)
	\\&\leq 2\omega(2(t-s)).
	\end{align*}
	This completes the proof of (\ref{eq:Dini-condition for Linear-approximation}). 
	Next, (\ref{Eq: Dini condition operators}) follows from (\ref{eq:Dini-condition for Linear-approximation}).
		For the second statement, let $t\in[0,T]$ and let $k\in\{0,1,\cdots,n\}$ be such that $t\in[\lambda_k,\lambda_{k+1}].$ Then
		\begin{align*}
		\A_\Lambda(t)-\A_m(t)&=\frac{\lambda_{k+1}-t}{\lambda_{k+1}-\lambda_k}[\A_k-\A_m(t)]+
		\frac{t-\lambda_{k}}{\lambda_{k+1}-\lambda_k}[\A_{k+1}-\A_m(t)]
		\\&=\frac{\lambda_{k+1}-t}{(\lambda_{k+1}-\lambda_k)^2}\int_{\lambda_k}^{\lambda_{k+1}}[\A_m(r)-\A_m(t)]{\rm d}r+
		\frac{t-\lambda_{k}}{(\lambda_{k+1}-\lambda_k)^2}\int_{\lambda_{k+1}}^{\lambda_{k+2}}[\A_m(r)-\A_m(t)]{\rm d}r.
		\end{align*}
		Then using (\ref{eq 3:Dini-condition}) and the fact that $\omega$ is non-decreasing we obtain
		\begin{align*}
		\|\A_\Lambda(t)-\A_m(t)\|_{\L(V,V_\gamma')}
		&\leq \frac{\lambda_{k+1}-t}{(\lambda_{k+1}-\lambda_k)^2}\int_{\lambda_k}^{\lambda_{k+1}}\omega(t-r){\rm d}r+
		\frac{t-\lambda_{k}}{(\lambda_{k+1}-\lambda_k)^2}\int_{\lambda_{k+1}}^{\lambda_{k+2}}\omega(t-r){\rm d}r
		\\&\leq \omega(|\Lambda|)+\omega(2|\Lambda|)\leq 2\omega(2|\Lambda|),
		\end{align*}
		which proves the claim.
\end{proof}
\noindent Recall that a coercive and bounded form $b:V\times V\to \C$ associated with the operator $B$ on $H$ has the Kato square root property if
\begin{equation}\label{squar root properties} D(B^{1/2})=V.\end{equation}
\noindent We prove in Proposition \ref{square root property for the Linear-approximation} below that $\fra_\Lambda(t,\cdot,\cdot)$ has the square root property for all $t\in[0,T]$ if $\fra_\Lambda(0;\cdot,\cdot)$ has it.  This is essentially based on the abstract result due to Arendt and Monniaux \cite[Proposition 2.5]{Ar-Mo15}. They proved that for two sesquilinear forms $\fra_1,\fra_2:V\times V\to\C$ which satisfies (\ref{eq:continuity-nonaut-introduction}), the form $\fra_1$ has the square root property if and only if $\fra_2$ has it provided that
\[|\fra_1(u,v)-\fra_2(u,v)|\leq c \|u\|_V\|v\|_{V_\gamma} \ u,v\in V\]
for some constant $c>0.$
\begin{proposition}\label{square root property for the Linear-approximation} Assume $\fra(0,.,.)$ has the square root property. Then $\fra_\Lambda(t,.,.)$ has the square root properties for all $t\in [0,T],$ too.
\end{proposition}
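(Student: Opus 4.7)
The plan is to apply the Arendt--Monniaux stability result cited just above the statement --- namely \cite[Proposition 2.5]{Ar-Mo15} --- in a two-step chain that passes through the intermediate form $\fra_\Lambda(0,\cdot,\cdot)$. That theorem asserts that for any two sesquilinear forms on $V$ satisfying (\ref{eq:continuity-nonaut-introduction}) with the same constants, one has the Kato square root property iff the other does, provided their difference is controlled by $c\,\|u\|_V\|v\|_{V_\gamma}$. The key observation is that Proposition \ref{Prop: Dini condition for Linear-approximation} has already delivered exactly the two perturbation estimates needed: (\ref{Eq2:Dini condition operators}) compares $\fra_\Lambda(t,\cdot,\cdot)$ with $\fra(t,\cdot,\cdot)$, and (\ref{eq:Dini-condition for Linear-approximation}) compares $\fra_\Lambda(t,\cdot,\cdot)$ with $\fra_\Lambda(s,\cdot,\cdot)$.

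Concretely, the first step transfers the square root property from $\fra(0,\cdot,\cdot)$ (which has it by hypothesis) to $\fra_\Lambda(0,\cdot,\cdot)$. Both forms satisfy (\ref{eq:continuity-nonaut-introduction}) with the same constants $M,\alpha,\beta$ --- this was recorded already in the construction of $\fra_\Lambda$ --- and at $t=0$ the bound (\ref{Eq2:Dini condition operators}) reads
\[
|\fra(0,u,v)-\fra_\Lambda(0,u,v)|\le 2\omega(2|\Lambda|)\,\|u\|_V\|v\|_{V_\gamma},\qquad u,v\in V.
\]
Invoking \cite[Proposition 2.5]{Ar-Mo15} then yields $D(\A_\Lambda(0)^{1/2})=V$.

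In the second step I fix an arbitrary $t\in[0,T]$ and compare $\fra_\Lambda(t,\cdot,\cdot)$ with $\fra_\Lambda(0,\cdot,\cdot)$. Again both forms sit in the same coercivity class with identical constants, and (\ref{eq:Dini-condition for Linear-approximation}) gives
\[
|\fra_\Lambda(t,u,v)-\fra_\Lambda(0,u,v)|\le\omega_\Lambda(t)\,\|u\|_V\|v\|_{V_\gamma},\qquad u,v\in V,
\]
so a second application of the same transfer principle produces $D(\A_\Lambda(t)^{1/2})=V$, which is the claim.

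I do not expect a serious obstacle; everything reduces to chaining a single abstract theorem through one intermediate form. The only verification that merits attention is that the coercivity constants are genuinely independent of $t$ and of the subdivision $\Lambda$, so that the $V_\gamma$-scale perturbation result applies at each of the two links with the same constants --- this uniformity is precisely why Proposition \ref{Prop: Dini condition for Linear-approximation} explicitly records that $\fra_\Lambda$ satisfies (\ref{eq:continuity-nonaut-introduction}) with the same $M,\alpha,\beta$ as $\fra$.
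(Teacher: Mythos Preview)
Your argument is correct. Both you and the paper reduce the claim to \cite[Proposition 2.5]{Ar-Mo15}; the only difference is in how the required $V_\gamma$-perturbation bound between $\fra_\Lambda(t,\cdot,\cdot)$ and $\fra(0,\cdot,\cdot)$ is obtained. The paper bounds $|\fra_\Lambda(t,u,v)-\fra(0,u,v)|$ in a single step, directly from the integral definition of $\fra_k$ and assumption (\ref{eq 3:Dini-condition}), obtaining the constant $2\sup_{r\in[0,T]}\omega(r)$ and then invoking the transfer principle once. You instead chain two applications of the transfer principle through the intermediate form $\fra_\Lambda(0,\cdot,\cdot)$, reusing the estimates (\ref{Eq2:Dini condition operators}) and (\ref{eq:Dini-condition for Linear-approximation}) already established in Proposition~\ref{Prop: Dini condition for Linear-approximation}. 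Your route is marginally more economical in that it recycles prior work rather than redoing a computation; the paper's route is marginally more direct. Either way the substance is identical.
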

\begin{proof} Let $t\in [0,T]$ and let $k\in\{0,1,\cdots,n\}$ be such that $t\in [\lambda_k,\lambda_{k+1}].$ Then Then assumption (\ref{eq 3:Dini-condition}) implies that
	\begin{align*}\mid \fra_\Lambda(t,u,v)-&\fra(0,u,v)\mid\leq\frac{1}{\lambda_{k+1}-\lambda_k}
	\int_{\lambda_k}^{\lambda_{k+1}}\mid\fra(r;u,v)-\fra(0,u,v)\mid{\rm  d}r
	\\&\qquad \qquad \ \ \ +\frac{1}{\lambda_{k+2}-\lambda_{k+1}}
	\int_{\lambda_{k+1}}^{\lambda_{k+2}}\mid\fra(r;u,v)-\fra(0,u,v)\mid{\rm  d}r
	\\&\leq \frac{1}{\lambda_{k+1}-\lambda_k}
	\int_{\lambda_k}^{\lambda_{k+1}}\omega(r)\|u\|_V\|v\|_{V_\gamma}{\rm  d}r
	+\frac{1}{\lambda_{k+2}-\lambda_{k+1}}
	\int_{\lambda_{k+1}}^{\lambda_{k+2}}\omega(r)\|u\|_V\|v\|_{V_\gamma}{\rm  d}r
	\\&\leq 2\sup_{t\in[0,T]}\omega(t)\|u\|_V\|v\|_{V_\gamma}.
	\end{align*}
	Now the claim follows from \cite[Proposition 2.5]{Ar-Mo15}.\end{proof}
\noindent

\noindent Let $\A_\Lambda$ be given by (\ref{Operator: approximation formula2}) and  consider the Cauchy problem
\begin{equation}\label{nCP in V'}
\dot{u}_\Lambda (t)+\A_\Lambda(t)u_\Lambda(t)=f(t) \quad  {a.e.} \ \ \text on \quad [0,T], \ \ u_\Lambda(0)=u_0.
\end{equation}
Next, we use the above results to prove that $\fra_\Lambda$ satisfies all assumption $\bf (H_1)$-$\bf(H_6)$ by taking $d_n=\omega(2|\Lambda|)=\omega(2T/n)$ and $\omega_n(\cdot)=\omega_\Lambda(\cdot).$

\begin{proposition}\label{main result affine approximation} Assume that $\fra$ satisfies (\ref{eq 1:Dini-condition})-(\ref{eq 3:Dini-condition}) and that $\fra(0,\cdot\cdot)$ has the square properties. Then  $\fra_\Lambda$ satisfies assumptions $\bf (H_1)$-$\bf(H_5),$ and satisfies also  $\bf(H_6)$ if moreover (\ref{model contiuity additional}) holds. Furthermore, the solution $u_\Lambda$ of (\ref{nCP in V'}) belongs to $C(0,T;V)$ for each given $u_0\in V$ and $f\in L^2(0,T;H).$
\end{proposition}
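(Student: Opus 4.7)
My plan is to set $\fra_n := \fra_{\Lambda_n}$ for a sequence of uniform subdivisions $(\Lambda_n)_{n\in\N}$ with mesh $h_n := |\Lambda_n|$ to be chosen below, and then take $d_n := 2\omega(2h_n)$ and $\omega_n := \omega_{\Lambda_n}$. Hypothesis $\bf (H_1)$ is the direct translation of the operator estimate (\ref{Eq2:Dini condition operators}) via the duality pairing $\fra_{\Lambda_n}(t;u,v)-\fra(t;u,v) = \langle(\A_{\Lambda_n}(t)-\A(t))u, v\rangle$, and $\bf (H_2)$ reads off from (\ref{eq:Dini-condition for Linear-approximation}) in the same way.

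For $\bf (H_3)$ I split $\omega_{\Lambda_n}(t)/t^{\gamma/2}$ in its two pieces: on $[0,2h_n]$ the ratio equals $t^{1-\gamma/2}\omega(4h_n)/h_n$, which is uniformly bounded once (\ref{eq 1:Dini-condition}) is used to dominate $\omega(4h_n) \le C_1(4h_n)^{\gamma/2}$; on $(2h_n,T]$ the ratio is $2\omega(2t)/t^{\gamma/2} \le 2^{1+\gamma/2}C_1$. The integral $\int_0^T \omega_n(r)r^{-1-\gamma/2}\d r$ splits in the same way, the first part bounded by the same arithmetic and the second by (\ref{eq 2:Dini-condition}) after the substitution $s=2r$ (extending $\omega$ by a constant to $[0,2T]$). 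Hypothesis $\bf (H_4)$ then follows from Proposition \ref{square root property for the Linear-approximation}, which promotes the Kato square root property of $\fra(0;\cdot,\cdot)$ to $\fra_{\Lambda_n}(t;\cdot,\cdot)$ at every $t$: combined with the Dini modulus (\ref{eq:Dini-condition for Linear-approximation}), the Arendt--Monniaux $L^2$-maximal regularity theorem \cite{Ar-Mo15} applies and additionally yields the continuity $u_{\Lambda_n} \in C([0,T];V)$ announced at the end of the statement.

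The delicate hypotheses are $\bf (H_5)$ and $\bf (H_6)$, for which the mesh has to be tuned. By the Remark preceding the proposition there is a null sequence $(r_k)\subset \R_+$ with $\omega(r_k)r_k^{-\gamma/2} \to 0$. I extract a strictly decreasing subsequence with $r_{k_n} \le 1/n$ and set $h_n := r_{k_n}/2$, rounding down to the nearest genuine uniform mesh if necessary; since $\omega$ is non-decreasing this only improves the bounds. Then $d_n = 2\omega(r_{k_n})$ is decreasing and
\[
d_n\, n^{\gamma/2} = 2(nr_{k_n})^{\gamma/2}\cdot\frac{\omega(r_{k_n})}{r_{k_n}^{\gamma/2}} \longrightarrow 0
\]
because $nr_{k_n}\le 1$, proving $\bf (H_5)$. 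Assuming in addition (\ref{model contiuity additional}), the inequality $2h_n \le 1/n$ allows one to split
\[
\int_0^{1/n}\frac{\omega_n(r)}{r^{1+\gamma/2}}\d r = \int_0^{2h_n}\frac{r\,\omega(4h_n)}{h_n\, r^{1+\gamma/2}}\d r + \int_{2h_n}^{1/n}\frac{2\omega(2r)}{r^{1+\gamma/2}}\d r.
\]
The first piece equals $C\,\omega(4h_n)h_n^{-\gamma/2}\to 0$ by (\ref{model contiuity additional}), while the second is dominated by $C\int_0^{2/n}\omega(s)s^{-1-\gamma/2}\d s \to 0$ by absolute continuity of the integral under (\ref{eq 2:Dini-condition}). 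This gives $\bf (H_6)$.

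The main obstacle I anticipate is the simultaneous coordination of the mesh sizes making $\bf (H_5)$ and $\bf (H_6)$ compatible, especially when only the liminf information in the Remark is available rather than the stronger (\ref{model contiuity additional}): one has to pick $h_n$ small relative to $1/n$ without destroying the absolute-continuity argument. The construction $h_n = r_{k_n}/2$ with $r_{k_n}\le 1/n$ threads precisely that needle, so that $\bf (H_5)$ holds unconditionally while $\bf (H_6)$ indeed requires the additional hypothesis (\ref{model contiuity additional}) to control the boundary term coming from the piecewise definition of $\omega_\Lambda$ near the origin.
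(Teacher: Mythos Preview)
Your argument is correct, and for $\bf(H_1)$--$\bf(H_3)$ and $\bf(H_6)$ it is essentially the same computation as the paper's. There are two genuine differences worth noting.

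\medskip
\noindent\emph{Hypothesis $\bf(H_4)$ and $u_\Lambda\in C([0,T];V)$.} You invoke the Arendt--Monniaux theorem \cite{Ar-Mo15}, feeding it the Dini modulus $\omega_{\Lambda_n}$ just verified in $\bf(H_2)$--$\bf(H_3)$ together with the square root property from Proposition~\ref{square root property for the Linear-approximation}. The paper instead exploits the concrete affine structure: since $t\mapsto\fra_\Lambda(t;u,v)$ is piecewise $C^1$, it applies \cite[Theorem~4.2]{ADLO14} directly, bypassing the Dini estimate for $\omega_\Lambda$. Both routes are valid; the paper's is more elementary (it does not need $\bf(H_3)$ as an input), while yours stays entirely within the framework set up in Section~\ref{weak approximation H}.

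\medskip
\noindent\emph{Hypothesis $\bf(H_5)$.} Here the approaches really diverge. The paper fixes the natural mesh $|\Lambda|=T/n$, so that $d_n=\omega(2T/n)$, and then obtains $d_n n^{\gamma/2}\to 0$ \emph{only after} assuming (\ref{model contiuity additional}); in effect it treats $\bf(H_5)$ and $\bf(H_6)$ together under that extra hypothesis, which is slightly weaker than what the statement promises. You instead tune the mesh via the liminf observation in the Remark: picking $h_n=r_{k_n}/2$ with $r_{k_n}\le 1/n$ and $\omega(r_{k_n})r_{k_n}^{-\gamma/2}\to 0$ forces $d_n n^{\gamma/2}\le 2\omega(r_{k_n})r_{k_n}^{-\gamma/2}\to 0$ without (\ref{model contiuity additional}). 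This is a genuine strengthening relative to the paper's own proof and matches the statement exactly. The price you pay is that your sequence $(\Lambda_n)$ is no longer the canonical one with $n$ subintervals, so the index $n$ in $\bf(H_5)$--$\bf(H_6)$ loses its geometric meaning; the paper's choice keeps that correspondence at the cost of the extra hypothesis.
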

\begin{proof} According to Proposition \ref{Prop: Dini condition for Linear-approximation}, $\fra_\Lambda$ satisfies $\bf (H_1)$ and $\bf(H_2).$ By the definition of $\omega_\Lambda$ it follows
	\begin{align*}\int_0^T\frac{\omega_\Lambda(t)}{t^{1+\gamma/2}}{\rm  d}t&
	=\int_0^{2|\Lambda|}\frac{\omega(4|\Lambda|)}{|\Lambda|}t^{-\gamma/2}{\rm  d}t+\int_{2|\Lambda|}^T\frac{\omega(2t)}{t^{1+\gamma/2}}{\rm  d}t
	\\&\leq\const\frac{\omega(4|\Lambda|)}{(4|\Lambda|)^{\gamma/2}}+\const\int_{0}^{2T}\frac{\omega(t)}{t^{1+\gamma/2}}{\rm  d}t
	\\&\leq\const\sup_{t\in[0,T]} \frac{\omega(t)}{t^{\gamma/2}}+\const\int_{0}^{2T}\frac{\omega(t)}{t^{1+\gamma/2}}{\rm  d}t<\infty
	\end{align*}
	which is finite by (\ref{eq 1:Dini-condition}) and (\ref{eq 2:Dini-condition}). Next, it is easy to prove that  \begin{equation}
	\sup_{t\in[0,T]}\frac{\omega_\Lambda(t)}{t^{\gamma/2}}\leq \const\sup_{t\in[0,T]}\frac{\omega(t)}{t^{\gamma/2}}<\infty.
	\end{equation} holds.
	On the other hand, the function $t\mapsto\fra_\Lambda(\cdot,u,v)$ is piecewise $C^1$ for all $u,v\in V$ and  $\fra_\Lambda(t,\cdot,\cdot), t\in[0,T],$ has the Kato square property by Lemma \ref{square root property for the Linear-approximation}. Then the Cauchy problem (\ref{nCP in V'}) has  $L^2-$maximal regularity in $H$ and  $u_\Lambda\in C(0,T;V)$ for each $u_0\in V$ and $f\in L^2(0,T;H)$ \cite[Theorem 4.2]{ADLO14}. Therefore, $\bf (H_3)$ and $\bf (H_4)$ are also satisfied by $\fra.$ Assume now that $(\ref{model contiuity additional})$ holds. Then we obtain that  $\displaystyle{\omega(2|\Lambda|)}{|\Lambda|^{-\gamma/2}}$ and 
	\begin{align*}
	\int_0^{2|\Lambda|} \frac{\omega_\Lambda(t)}{t^{1+\gamma/2}}{\rm  d}t=\frac{\omega(4|\Lambda|)}{|\Lambda|}\int_0^{2|\Lambda|} \frac{{\rm  d}t}{t^{\gamma/2}}=2\frac{\omega(4|\Lambda|)}{(2|\Lambda|)^{\gamma/2}}
	\end{align*}
converge to $0$ as $|\Lambda|\lra 0.$ Thus the fact that $\omega$ is non-decreasing complete the proof.
	
	\end{proof}
  The next provides in particular an alternative proof of some results in \cite{Ar-Mo15}.
\begin{corollary}\label{corollary: estimation uniform approx affine} Assume that $\fra$ satisfies (\ref{eq 1:Dini-condition})-(\ref{eq 3:Dini-condition}) and that $\fra(0,\cdot\cdot)$ has the square properties. Then (\ref{nCP in V'}) has $L^2$-maximal regularity in $H$ and for each $u_0\in V$ and $f\in L^2(0,T;H)$ the solution $(u_\Lambda)_\Lambda$ converges weakly in $MR_2(V,H)$ as $|\Lambda|\lra 0,$ and $u:=w-\lim\limits_{|\Lambda|\to 0} u_\Lambda$ satisfies (\ref{Abstract Cauchy problem 0}). If moreover, $(\ref{model contiuity additional})$ holds then $u_\Lambda\lra u$ strongly in $MR_2(V,H)\cap C(0,T;V)$  and uniformly on $(u_0,f)$ as $|\Lambda|\lra 0.$ Further, the following estimates 
	\begin{equation*}
	\|u_\Lambda-u\|_{MR_2(V,H)}\leq \textbf{c}\left[\big(1+\frac{1}{|\Lambda|^{\gamma/2}}\big)d_n
	+\int_0^{2|\Lambda|}\frac{\omega(t)}{t^{1+\gamma/2}}{\rm d}t
	\right]\Big[\|f\|_{L^2(0,T;H)}+\|u_0\|_V.\Big]
	\end{equation*}
	and 
	\begin{equation*}
	\|u_\Lambda-u\|_{C(0,T;V)}\leq \textbf{c}\left[\big(1+\frac{1}{|\Lambda|^{\gamma/2}}\big)d_n
	+\int_0^{2|\Lambda|}\frac{\omega(t)}{t^{1+\gamma/2}}{\rm d}t
	\right]\Big[\|f\|_{L^2(0,T;H)}+\|u_0\|_V.\Big]
	\end{equation*}
	holds.
\end{corollary}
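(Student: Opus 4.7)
The plan is to obtain this corollary as a direct application of the general theory developed in Sections \ref{weak approximation H}, \ref{Uniform Convergence} and \ref{Uniform approximation on $C(0,T;V)$} once we have verified, for the affine approximation $\fra_\Lambda$, the hypotheses $\bf(H_1)$--$\bf(H_6)$. The bridge is Proposition \ref{main result affine approximation}, which already does most of this verification. Concretely, identify the discrete index $n$ with a uniform subdivision of mesh $|\Lambda|=T/(n+1)$, and set
\[
 d_n:=2\omega(2|\Lambda|),\qquad \omega_n(\cdot):=\omega_\Lambda(\cdot).
\]
The estimate \eqref{Eq2:Dini condition operators} then gives $\bf(H_1)$, the estimate \eqref{Eq: Dini condition operators} together with Proposition \ref{Prop: Dini condition for Linear-approximation} gives $\bf(H_2)$, and the uniform bounds on $\omega_\Lambda$ obtained in the proof of Proposition \ref{main result affine approximation} give $\bf(H_3)$. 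The $L^2$-maximal regularity in $H$ for each fixed $\Lambda$, i.e.\ $\bf(H_4)$, follows because $\fra_\Lambda(\cdot,u,v)$ is piecewise $C^1$ and $\fra_\Lambda(t,\cdot,\cdot)$ has the Kato square root property for every $t$ by Proposition \ref{square root property for the Linear-approximation}; one then invokes \cite[Theorem 4.2]{ADLO14}.

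With $\bf(H_1)$--$\bf(H_4)$ in hand, Theorem \ref{main theorem:l2 max reg in H} immediately yields that the limit problem (\ref{Abstract Cauchy problem 0}) has $L^2$-maximal regularity in $H$ and that $(u_\Lambda)$ converges weakly in $MR_2(V,H)$ to the unique solution $u$. This is the first statement of the corollary.

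For the second, stronger assertion one assumes the additional hypothesis (\ref{model contiuity additional}). We need to check $\bf(H_5)$ and $\bf(H_6)$. The monotonicity of $d_n=2\omega(2|\Lambda|)$ in $|\Lambda|$ and the identity $d_n n^{\gamma/2} = \const\,\omega(2|\Lambda|)/|\Lambda|^{\gamma/2}$ together with (\ref{model contiuity additional}) yield $\bf(H_5)$. For $\bf(H_6)$, a direct computation splits the integral of $\omega_\Lambda(r)/r^{1+\gamma/2}$ on $(0,1/n)$ into a piece where $\omega_\Lambda$ is affine and a piece controlled by $\omega(2r)$; both pieces are dominated by $\omega(4|\Lambda|)/|\Lambda|^{\gamma/2}$ plus $\int_0^{2|\Lambda|}\omega(r)/r^{1+\gamma/2}\,\d r$, which tend to $0$ as $|\Lambda|\to 0$ thanks to (\ref{model contiuity additional}) and the Dini-type integrability (\ref{eq 2:Dini-condition}). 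This allows us to apply Theorem \ref{convergence uniform}, giving the strong convergence in $MR_2(V,H)$ with the quantitative estimate
\[
 \|\dot u_\Lambda-\dot u\|_{L^2(0,T;H)}\leq \const\!\left[(1+|\Lambda|^{-\gamma/2})\,\omega(2|\Lambda|)+\int_0^{2|\Lambda|}\!\tfrac{\omega(r)}{r^{1+\gamma/2}}\,\d r\right]\!\left[\|f\|_{L^2(0,T;H)}+\|u_0\|_V\right],
\]
after translating $\int_0^{1/n}\omega_\Lambda(r)/r^{1+\gamma/2}\d r$ into $\int_0^{2|\Lambda|}\omega(r)/r^{1+\gamma/2}\d r$ as above. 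Combined with Theorem \ref{proposition: uniform approx V'} (which already gives uniform convergence in $L^2(0,T;V)$) this furnishes the first displayed estimate.

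Finally, the $C(0,T;V)$ estimate follows by applying Proposition \ref{continuity of the solution} and Corollary \ref{corollary continuity of solution} to the approximation $\fra_\Lambda$, with the same substitution $d_n\mapsto\omega(2|\Lambda|)$, $\omega_n\mapsto\omega_\Lambda$. The main technical obstacle is purely bookkeeping: carefully matching the abstract quantities $d_n$, $\omega_n$, and the cut-off integrals $\int_0^{1/n}\omega_n(r)/r^{1+\gamma/2}\d r$ of the general framework with their explicit counterparts for the linear interpolation $\fra_\Lambda$, and using the monotonicity of $\omega$ to absorb factors of $2$ and $4$ arising from the piecewise definition of $\omega_\Lambda$. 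Once this identification is made, both estimates of the corollary drop out verbatim from Theorem \ref{convergence uniform} and Corollary \ref{corollary continuity of solution}.
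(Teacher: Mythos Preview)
Your proposal is correct and follows essentially the same route as the paper: the paper's proof is a single line invoking Theorem~\ref{convergence uniform}, Proposition~\ref{main result affine approximation}, and Corollary~\ref{corollary continuity of solution}, and you have simply unpacked these references, verifying $\bf(H_1)$--$\bf(H_6)$ through Proposition~\ref{main result affine approximation} and then feeding the affine approximation into the abstract machinery of Sections~\ref{weak approximation H}--\ref{Uniform Convergence} and the $C(0,T;V)$ section. Your additional invocation of Theorem~\ref{main theorem:l2 max reg in H} for the weak convergence part is appropriate (the paper leaves this implicit), and the bookkeeping you describe for translating $d_n$, $\omega_n$, and the cut-off integrals into their concrete $|\Lambda|$-counterparts is exactly what is needed.
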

\begin{proof} The proof follows from Theorem (\ref{convergence uniform}), Proposition \ref{main result affine approximation} and Corollary \ref{corollary continuity of solution}
\end{proof}



\end{document}